\theoremstyle{definition}
\newtheorem{definition}{Definition}
\newtheorem{notation}{Notation}
\theoremstyle{plain}
\newtheorem{theorem}{Theorem}
\newtheorem{lemma}{Lemma}
\newtheorem{corollary}{Corollary}
\newtheorem{proposition}{Proposition}
\newtheorem{fact}{Fact}
\theoremstyle{remark}
\newtheorem{remark}{Remark}
\theoremstyle{definition}
\newtheorem{example}{Example}
\begin{document}

\title{Lifting link invariants by functors on nanophrases 
}
\author{Tomonori Fukunaga}
\address{Department of Life, Environment and Applied Chemistry,
Fukuoka Institute of Technology, Fukuoka, 811-0295, Japan}
\email{fukunaga@fit.ac.jp}

\author{Noboru Ito}
\address{
National Institute of Technology, Ibaraki College, 866 Nakane, Hitachinaka, Ibaraki, 312-8508, Japan 
}
\email{nito@gm.ibaraki-ct.ac.jp}
\thanks{MSC2020: 55N99, 68R15, 57K12, 57K16.
}  
\keywords{nanoword; nanophrase; functor; Jones polynomial; Kauffman bracket polynomial}
\date{January 9, 2024}
\maketitle

\begin{abstract}
Nanophrases have a filtered structure consisting of an infinite number of  categories, and each category has a homotopy structure.  Among these categories, the one that we are most familiar with is the category of links.  
Interestingly, the category in which the Jones polynomial is defined consists of a category that is actually less informative category than the link category, and so is the quandle category. The former is called the pseudolink category and the latter the quasilink category, which are covered by the link category; there is a known filtration: links cover pseudolinks, pseudolinks cover virtual strings, and virtual strings cover free links.  The introduction and groundwork for nanophrases were done  by Turaev around 2005.  
In this paper, we introduce functors (Theorem~\ref{thm_functor}) from general nanophrases to virtual strings/pseudolinks/quasilinks/free links.  These functors are powerful because of the difference between each of two.  To demonstrate the effectiveness of such new construction of   functors, the Jones pseudolink polynomial (Section~\ref{jones-by-word}), which implies the Jones link polynomial, is extended to general filtered  nanophrases using one of the functors (Section~\ref{sec6}).  
In particular, the information level of the nanophrases used in each category is explicitly described in the above construction process.     
\end{abstract}

\section{Introduction}
V.~Turaev introduced the theory of topology of words/phrases \cite{Turaev2007, Turaev2006, Turaev2007Lec}.  The theory is an extension of the link theory, and it induces a category of nanophrases, which is universal for links and plane curves.  In particular, there are the injection maps from the set of links to that of nanophrases with a certain homotopy.  
Similarly, a structure of nanophrases induces other categories which objects and morphisms are described as words/phrases  and their homotopies.  
Links, virtual links, flat virtual links, and plane curves are examples of categories of nanophrases.  In order to develop the theory of words/phrases, here called nanoword/nanophrase theory, a technique is encoding link diagrams using nanophrases.  
In the nanophrase theory of Turaev, the Jones polynomial is redefined by pseudolinks though pseudolinks have much less information than those of links \cite[Section~8]{Turaev2006}.  
In fact, the Jones link polynomial is nothing more than the Jones pseudolink polynomial; there is a projection from the set of links to a set of pseudolinks, where the projection cannot be bijective \cite[Section~7]{Turaev2006}.  

It is quite natural to consider the functor (Section~\ref{sec6}) that appears in the difference between links and pseudolinks.  We show that this idea is effective (Theorem~\ref{thm_functor}) with the concrete example (: the Jones polynomial,  Section~\ref{sec6}).  

Here is a literature review.  It is known that invariants of virtual strings are extended to general objects, called \'{e}tale phrases, corresponding to geometric objects that allow singularities.  It is given by constructing a functor between nanophrases and virtual strings that generalizes invariants consisting of known techniques to invariants of more complicated objects \cite{fukunaga2009homotopy}.  Hence, our goal here is to generalize such a functor of  \cite{fukunaga2009homotopy}.      

In this paper, we introduce a general category of nanophrases and \emph{knotlike homotopy}, which includes the category of links, and define an invariant of more general objects covering links.  We believe that the objects covering links are geometric objects, based on the following circumstantial evidence.  

For example, it is known that pseudolinks or quasilinks are covered by links, and pseudolinks fully describe the Jones polynomial, and the category of quasilinks implies to quandle theory \cite{Turaev2006}.  Further virtual strings are covered by pseudolinks; free links \cite{Manturov2010} are also covered by pseudolinks or virtual strings.  Links themselves are also covered by nanophrases.  Here, it would be desirable to find a good  covering space as a category of nanophrases that would cover links in a way that would be suitable for studying properties of links, but this is an open question.  It is known that Turaev \cite{Turaev2006} has also defined a more general category, the so-called category of \'{e}tale words, a singularity-theoretic generalization of nanophrases that captures  properties of links.

As a start in this direction of nanophrase research, we will show that the range of applications of the celebrated Kauffman bracket polynomial \cite{Kauffman1987} is extended by applying the homotopy structure of the sequence of coverings: nanophrases, links, pseudolinks,  virtual strings, and free links.  In particular, after we construct the Kauffman bracket polynomial of pseudolinks in two ways (one by Turaev; the other by us), we show that it gives invariants of pseudolinks or virtual strings.  With these preparations, we introduce a functor from general nanophrases to virtual strings/pseudolinks, and using it, we extend the Kauffman bracket polynomial of virtual strings/pseudolinks to covering nanophrases.   
In Section~\ref{sec6}, we define a functor in a different way from Theorem~\ref{thm_functor}.  This is to show that in more specific situations, functors are constructed with finer information, and also to show that the construction of Theorem~\ref{thm_functor} is not all.  
We also mention a relationship between our technique and quandle theory that fits Alexander modules.  

  
\section{Turaev theory of words}\label{turaevswords}
In this section, notations and definitions are based on \cite{Turaev2007, Turaev2006} (see also \cite{Turaev2007Lec}).   
\subsection{Nanowords and Nanophrase}   

An {\it alphabet} is a finite set and {\it letters} are its elements.    
For an alphabet $\alpha$, an $\alpha$-alphabet $\mathcal{A}$ is a set where every element $A$ of $\mathcal{A}$ has a projection $|~|:$ $A\mapsto |A|\in\alpha$.  
A {\it word of length $n \ge 1$} in an alphabet $\mathcal{A}$ is a mapping $w:$ $\hat{n} \to {\mathcal A}$, where $\hat{n}$ $=$ $\{ i \in {\mathbb{N}}~|~1 \le i \le n \}$.  Such a word is encoded by the sequence $w(1)w(2) \cdots w(n)$.  By definition, there exists the unique word $\emptyset$ of length $0$ called the {\it empty word}.  A word $w:$ $\hat{n} \to {\mathcal A}$ is a {\it Gauss word} in an alphabet $\mathcal{A}$ if each element of $\mathcal A$ is the image of precisely two elements of $\hat{n}$ or $w$ is $\emptyset$.  A {\it Gauss phrase} in an alphabet $\mathcal{A}$ is a sequence of words $w_1$, $w_2, \dots$, $w_m$ in $\mathcal{A}$ denoted by $w_1|w_2|\cdots|w_m$ such that $w_1 w_2 \cdots w_m$ is a Gauss word in $\mathcal{A}$.  We call $w_{i}$ the $i$th component of the Gauss phrase.  In particular, if a Gauss phrase has only one component, that component is a Gauss word.  A {\it nanoword} $(\mathcal A, w)$ over $\alpha$ is a pair consisting of an $\alpha$-{\it alphabet} $\mathcal A$ and a Gauss word $w$ in the alphabet $\mathcal A$.  For a nanoword $(\mathcal{A}, w_{1}w_{2}\cdots w_{k})$ over $\alpha$ consisting of subwords $w_{i}$ $(1 \le i \le k)$ of $w$, a {\it nanophrase} of length $k \ge 0$ over $\alpha$ is defined as $(\mathcal{A}, w_{1}|w_{2}|\cdots|w_{k})$.   A nanophrase $(\mathcal{A}, w_{1}|w_{2}|\cdots|w_{k})$ is denoted by $w_{1}|w_{2}|\cdots|w_{k}$ simply when no confusion may occur.    
We call $w_{i}$ the $i$th component of the nanophrase.  
A nanoword $w$ over $\alpha$ yields a nanophrase $w$ of length $1$.  We denote a set of nanophrases over $\alpha$ with a fixed involution $\tau : \alpha \rightarrow \alpha$ by $\mathcal{P}(\alpha, \tau)$.

An {\it isomorphism} of $\alpha$-alphabets $\mathcal{A}_1$ and $\mathcal{A}_2$ is a bijection $f:$ $\mathcal{A}_1$ $\to$ $\mathcal{A}_2$ such that $|A|=|f(A)|$ for all $A \in \mathcal{A}_1$.  Two nanophrases $(\mathcal{A}_{1}, p_{1}= w_{1}|w_{2}|\cdots|w_{k})$ and $(\mathcal{A}_{2}, p_{2}= w'_{1}|w'_{2}|\cdots|w'_{k'})$ over $\alpha$ are {\it isomorphic} if $k = k'$ and there exists an isomorphism of $\alpha$-alphabets $f:$ $\mathcal{A}_1$ $\to$ $\mathcal{A}_2$ such that $w'_{i}=fw_{i}$ for every $i$ $\in \{1, 2, \dots, k\}$.  
  
\subsection{Homotopy of nanophrases}
Let $\alpha$ be an alphabet, $\tau:$ $\alpha \to \alpha$ an involution, and $S$ a subset of $\alpha \times  \alpha \times \alpha$.  We call the triple $(\alpha, \tau, S)$ {\it homotopy data}.  
\begin{definition}[$S$-homotopy]
Let $(\alpha, \tau, S)$ be homotopy data.  
Two nanophrases are $S$-homotopic if one nanophrase is changed into the other by a finite sequence of isomorphisms and three type deformations (H1)--(H3), called {\it homotopy moves}, and their inverses.  The relation $S$-homotopy is denoted by $\simeq_{S}$.  

(H1) Replace ($\mathcal{A}$, $xAAy$) with ($\mathcal{A} \setminus \{A\}$, $xy$) for $\mathcal{A}$, and $x$, $y$ are words in $\mathcal{A} \setminus \{A\}$ that possibly include the character $|$ such that $xy$ is a Gauss phrase.  

(H2) Replace $(\mathcal{A}, xAByBAz)$ with $(\mathcal{A} \setminus \{A, B\}, xyz)$ if $A, B \in \mathcal{A}$ with $\tau(|A|)=|B|$ where $x$, $y$, $z$ are words in $\mathcal{A} \setminus \{A, B\}$ that possibly include the character $|$ such that $xyz$ is a Gauss phrase.  

(H3) Replace $(\mathcal{A}, xAByACzBCt)$ with $(\mathcal{A}, xBAyCAzCBt)$ for $(|A|, |B|, |C|)$ $\in S$, where $x$, $y$, $z$, and $t$ are words in $\mathcal{A}$ that possibly include the character $|$ such that $xyzt$ is a Gauss phrase.  
\end{definition}
We denote $\mathcal{P}(\alpha, \tau)/{\simeq_S}$ by $\mathcal{P}(\alpha, \tau, S)$. 

We note Lemma~\ref{lemma1} and Lemma~\ref{abab} from \cite[Lemma 2.1, Lemma 2.2]{Turaev2006} (The following lemmas are those of Gauss phrases).  

\begin{lemma}\label{lemma1}
Let $(\alpha, \tau, S)$ be homotopy data and let $\mathcal A$ be an $\alpha$-alphabet.  
Let $A$, $B$, and $C$ be distinct letters in $\mathcal A$ and let $x$, $y$, $z$, and $t$ be words that possibly include the character $|$ in the alphabet $\mathcal A$ $\setminus$ $\{A, B, C\}$ such that $xyzt$ is a Gauss phrase in this alphabet.  Then, 

\begin{enumerate}
\item[(i)] $(\mathcal{A}, xAByCAzBCt)$ $\simeq_{S}$ $(\mathcal{A}, xBAyACzCBt)$~{\text{for}}~$(|A|, \tau(|B|), |C|) \in S$,  

\item[(ii)] $(\mathcal{A}, xAByCAzCBt)$ $\simeq_{S}$ $(\mathcal{A}, xBAyACzBCt)$~{\text{for}}~$(\tau(|A|), \tau(|B|), |C|) \in S$, 

\item[(iii)] $(\mathcal{A}, xAByACzCBt)$ $\simeq_{S}$ $(\mathcal{A}, xBAyCAzBCt)$~{\text{for}}~$(\tau(|A|), |B|, |C|) \in S$.  
\end{enumerate}
\end{lemma}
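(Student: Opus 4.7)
The plan is to derive (i)--(iii) from the definitional move (H3) via the standard trick for producing reoriented Reidemeister-III-type moves: augment the nanophrase by inserting an auxiliary pair of letters via H2, apply H3 once on the augmented word using the triple supplied by the hypothesis, and then remove the auxiliary pair via the inverse of H2. All three statements differ from H3 only by swapping the internal order of one or two of the paired occurrences, so the same template works in each case, with the labels of the auxiliary letters chosen to absorb the $\tau$'s appearing in the hypothesis triple.

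For (i), I would introduce fresh letters $D,E$ satisfying $\tau(|D|)=|E|$ with $|D|$ and $|E|$ chosen so that, in the augmented word, a subword of H3 shape appears on three symbols (one of which is $A$, another one of $D,E$, and the third $C$), with associated triple exactly $(|A|,\tau(|B|),|C|)\in S$. The H2 insertion of the pattern $D\cdots E\cdots E\cdots D$ is reversible by definition of H2, so the auxiliary pair can be removed at the end. After one application of H3 the letters $A,B,C$ occupy the positions dictated by the right-hand side; the auxiliary pair ends up in a configuration that H2 (and, if convenient, an H1-cancellation on an adjacent double letter) eliminates, leaving $xBAyACzCBt$. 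For (ii) and (iii) the same template applies, but the insertion is adjusted so the $\tau$ gets absorbed at the first coordinate (respectively at the first and second coordinates) of the triple; the involution identity $\tau\circ\tau=\mathrm{id}$ is what keeps the hypothesis triple consistent with the triple required by H3.

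The main obstacle is purely combinatorial bookkeeping: choosing insertion positions so that exactly the desired H3 template is visible (and no spurious overlapping pattern is produced), checking at each stage that the word remains a Gauss phrase in the enlarged alphabet, and tracking how the two occurrences of each letter migrate so the final H2 cancellation yields the stated right-hand side. No idea beyond H1--H3 and $\tau\circ\tau=\mathrm{id}$ is required, but the three cases demand slightly different insertions, so each must be worked out explicitly on the letter sequence; I would carry out (i) in full and indicate the modifications needed for (ii) and (iii).
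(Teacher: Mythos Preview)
The paper does not give its own proof of this lemma; it simply records the statement and cites \cite[Lemma~2.1]{Turaev2006}. Your H2--H3--H2 template is exactly Turaev's argument in that reference, so the plan is correct and there is nothing further to compare.

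One refinement to note before you write out the details for (i): the pair cancelled by the final H2 is not the inserted pair $(D,E)$ but rather $D$ together with the \emph{original} letter $B$; the surviving auxiliary $E$ (chosen with $|E|=|B|$) is then renamed to $B$ by an isomorphism of $\alpha$-alphabets. Concretely, taking $|D|=\tau(|B|)$ and $|E|=|B|$, the chain reads
\[
xAByCAzBCt \;\xrightarrow{\ \text{H2}^{-1}\ }\; xEDAByCAzBCDEt \;\xrightarrow{\ \text{H3}\ }\; xEADByACzBDCEt \;\xrightarrow{\ \text{H2}\ }\; xEAyACzCEt \;\xrightarrow{\ \cong\ }\; xBAyACzCBt,
\]
where the H3 step (applied in the reverse direction) uses the triple $(|A|,|D|,|C|)=(|A|,\tau(|B|),|C|)\in S$. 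So your phrase ``the auxiliary pair ends up in a configuration that H2 eliminates'' slightly misidentifies which two letters are removed, but the bookkeeping you anticipate will make this evident. Parts (ii) and (iii) run analogously with the insertion placed so that the $\tau$'s land on the correct coordinates; no H1 move is needed anywhere.
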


\begin{lemma}\label{abab}
Suppose that $S$ $\cap$ $(\alpha \times \{b\} \times \{b\})$ $\neq$ $\emptyset$ for all $b \in \alpha$.  Let $(\mathcal{A}, xAByABz)$ be a nanophrase over $\alpha$ with $|B| = \tau(|A|)$, where $x$, $y$, and $z$ are words that possibly include the character $|$ in the alphabet $\mathcal{A} \setminus \{A, B\}$ such that $xyz$ is a Gauss phrase in this alphabet.  Then, 
$(\mathcal{A}, xAByABz)$ $\simeq_{S}$ $(\mathcal{A} \setminus \{A, B\}, xyz)$.  
\end{lemma}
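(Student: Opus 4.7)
The plan is to derive Lemma~\ref{abab} from Lemma~\ref{lemma1}(ii) together with the cancellation moves (H1) and (H2). Set $b=|B|$, so that $|A|=\tau(b)$, and use the hypothesis to choose $c\in\alpha$ with $(c,b,b)\in S$. Pick a fresh letter $C\notin\mathcal{A}$ and assign $|C|=\tau(c)$.

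First I would apply the inverse of (H1) to insert the pair $CC$ between the first $A$ and the first $B$ of $xAByABz$, producing $(\mathcal{A}\cup\{C\},\,xACCByABz)$. The point of this specific insertion is that, if we relabel the three letters of Lemma~\ref{lemma1}(ii) by taking its $A$, $B$, $C$ to be our $C$, $A$, $B$ respectively, the new nanophrase becomes literally the right-hand side form $xBAyACzBCt$ of that lemma, with the lemma's $y$ empty, its $z$ equal to our $y$, and its $t$ equal to our $z$; the three adjacency constraints $BA$, $AC$, $BC$ of the form are all visibly satisfied.

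Next I would invoke Lemma~\ref{lemma1}(ii). Under the above relabelling, its side condition $(\tau(|A_{\mathrm{lem}}|),\tau(|B_{\mathrm{lem}}|),|C_{\mathrm{lem}}|)$ becomes $(\tau(\tau(c)),\tau(|A|),|B|)=(c,b,b)\in S$, which is precisely the triple guaranteed by the hypothesis. The lemma then rewrites our nanophrase as $(\mathcal{A}\cup\{C\},\,xCABCyBAz)$. In this word the letters $A$ and $B$ now occur in the nested pattern $A,B,B,A$, separated by the subword $Cy$, so (H2)---whose hypothesis $\tau(|A|)=|B|$ is exactly the condition in the statement---cancels them and produces $(\mathcal{A}\cup\{C\},\,xCCyz)$. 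A final application of (H1) removes the now adjacent $CC$, leaving $(\mathcal{A},\,xyz)$, as required.

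The delicate point of this plan is the second step. One has to check, for the particular gap where the $CC$ is inserted, that all three pairs $BA$, $AC$, $BC$ demanded by the RHS form of Lemma~\ref{lemma1}(ii) are indeed adjacent in the new word, and that under the chosen relabelling and the type assignment $|C|=\tau(c)$ the translated side condition of that lemma becomes exactly $(c,b,b)$. Once this identification has been verified, the remaining moves are immediate.
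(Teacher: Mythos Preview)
Your proof is correct and is essentially the standard argument from Turaev's paper \cite[Lemma~2.2]{Turaev2006}, which this paper cites rather than reproving: insert an auxiliary $CC$ via $(\mathrm{H1})^{-1}$, apply the relabelled instance of Lemma~\ref{lemma1}(ii) to pass from $xACCByABz$ to $xCABCyBAz$, then cancel with (H2) and (H1). Your verification of the side condition $(\tau(|C|),\tau(|A|),|B|)=(c,b,b)\in S$ and of the required adjacencies is accurate.
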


\begin{definition}[$\nu$-shift]\label{shiftWord}
Let $\alpha$ be an alphabet.  Let $\nu:$ $\alpha$ $\to$ $\alpha$ be an involution.     
The $\nu$-{\emph{shift}} of a nanoword $(\mathcal{A}, w:\hat{n} \to \mathcal{A})$ over $\alpha$ is the nanoword $(\mathcal{A'}, w':\hat{n} \to \mathcal{A'})$ obtained by steps (1)--(3):

(1) Let $\mathcal{A}$ $=$ $(\mathcal{A} \setminus \{A\})$ $\cup$ $\{A_{\nu}\}$, where $A_{\nu}$ is a letter not belonging to $\mathcal{A}$.  

(2) The projection $\mathcal{A}' \to \alpha$ extends the given projection $\mathcal{A} \setminus \{A\} \to \alpha$ by $|A_{\nu}|=\nu(|A|)$.  

(3) The word $w'$ in the alphabet $\mathcal{A}'$ is defined by $w' = xA_{\nu}yA_{\nu}$ for $w = AxAy$. 

We call $\nu$ a {\it shift involution}. 
\end{definition}


\begin{definition}\label{shiftPhrase}
Fix an alphabet $\alpha$ and a shift involution $\nu$ in $\alpha$.  
For $i$ $=$ $1, \dots, k$, the $i$th $\nu$-{\it shift} of a nanophrase $P$ moves the first letter, say $A$, of $w_{i}$ to the end of $w_{i}$, keeping $|A| \in \alpha$ if $A$ appears in $w_{i}$ only once and applying $\nu$ if $A$ appears in $w_{i}$ twice.  All other words in $P$ are preserved.  
\end{definition}
\begin{definition}[$\nu$-permutation]
Fix an alphabet $\alpha$ and a shift involution $\nu$ in $\alpha$.  
For two words $u$ and $v$ on an $\alpha$-alphabet $\mathcal{A}$, we define the projection $\mathcal{A} \to \alpha$ by sending $A \in \mathcal{A}$ to $\nu(|A|) \in \alpha$ if $A$ appears both in $u$ and $v$, and sending $A$ to $|A|$ otherwise.  We denote the set $\mathcal{A}$ with this projection to $\alpha$ by $\mathcal{A}_{u \cap v}$.  For $i$ $=$ $1, \dots, k-1$, a transformation replacing a nanophrase $(\mathcal{A}, w_{1}|w_{2}|\cdots|w_{k})$ with the nanophrase $(\mathcal{A}_{w_{i} \cap w_{i+1}}, w_{1}|w_{2}|\cdots|w_{i-1}|w_{i+1}|w_{i}|w_{i+2}|\cdots|w_{k})$ is called a $\nu$-{\it permutation} of the $i$th and $(i+1)$st words.  The operation is involutive.  
\end{definition}

Let $\mathcal{P}(\alpha,\tau, S, \nu)$ be the set of nanophrases over $\alpha$ quotiented by the equivalence relation generated by $S$-homotopy, $\nu$-shifts and $\nu$-permutations on phrases.    The set $\mathcal{P}(\alpha,\tau, S, \nu)$ has remarkable information as we see some examples (notations and facts) in \cite{Turaev2006} as follows.  

\begin{notation}\label{def_virtual}
Let $\alpha_{*}$  $=$ $\{a_{+}$, $a_{-}$, $b_{+}$, $b_{-}\}$ with the involutions $\tau_{*}:$ $a_{\pm} \mapsto b_{\mp}$ and $\nu_*:$ $a_{\pm} \mapsto b_{\pm}$.  Let $S_{*}$ $=\{(a_{\pm},$ $a_{\pm}, a_{\pm}),$ $(a_{\pm}, a_{\pm}, a_{\mp}),$ $(a_{\mp}, a_{\pm}, a_{\pm}),$ $(b_{\pm}, b_{\pm}, b_{\pm}),$ $(b_{\pm}, b_{\pm}, b_{\mp}),$ $(b_{\mp}, b_{\pm},$ $b_{\pm})\}$.  
\end{notation}
\begin{fact}[Section~6.3 of \cite{Turaev2006}]\label{fact_v}
There exists a canonical bijection between the set of virtual links and $\mathcal{P}(\alpha_{*}, \tau_{*}, S_{*}, \nu_{*})$.  
\end{fact}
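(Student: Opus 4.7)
The plan is to construct explicit mutually inverse maps between the set of virtual link diagrams modulo generalized Reidemeister moves and the set $\mathcal{P}(\alpha_{*}, \tau_{*}, S_{*}, \nu_{*})$, and then verify that the equivalence relations on each side correspond.

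First, I would define the forward map $\Phi$ from virtual link diagrams to nanophrases. Given an oriented virtual link diagram $D$ with components $C_1, \dots, C_k$, let $\mathcal{A}$ be the set of \emph{classical} crossings of $D$ (virtual crossings are discarded). Choose a basepoint on each $C_i$; traversing $C_i$ from its basepoint yields a word $w_i$ in $\mathcal{A}$ that records the classical crossings encountered, and each crossing is visited twice in total, so $w_1|w_2|\cdots|w_k$ is a Gauss phrase. The projection $|\cdot|\colon\mathcal{A}\to\alpha_{*}$ is defined at a classical crossing $A$ by $|A|=a_{\epsilon}$ if the over-strand at $A$ is traversed before the under-strand (in the chosen orientations and basepoints) and $|A|=b_{\epsilon}$ otherwise, where $\epsilon\in\{+,-\}$ is the sign of the crossing.

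Next, I would verify that $\Phi$ descends to a well-defined map on virtual link equivalence classes. Virtual Reidemeister (detour) moves do not alter any classical crossing, so they leave the Gauss phrase unchanged. Classical R1 corresponds exactly to (H1), because a kink introduces a single letter $A$ adjacent to itself with no constraint on $|A|$. Classical R2 corresponds to (H2): the two created crossings have opposite signs and opposite over/under order of traversal, so their labels are related by $\tau_{*}(a_{\pm})=b_{\mp}$. Classical R3 corresponds to (H3), and an inspection of the oriented versions of R3 together with the derived relations of Lemma~\ref{lemma1} shows that the admissible triples $(|A|,|B|,|C|)$ are precisely those listed in $S_{*}$. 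Sliding the basepoint of $C_i$ past a single classical crossing realizes the $i$th $\nu_{*}$-shift (sliding past a self-crossing flips over-first $\leftrightarrow$ under-first while preserving the sign, which is exactly $\nu_{*}(a_{\pm})=b_{\pm}$; sliding past a crossing shared with another component leaves the projection unchanged), and transposing two adjacent components is the $\nu_{*}$-permutation, which flips the over/under role of every shared crossing.

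For the inverse $\Psi$, I would realize each word $w_i$ of a nanophrase $(\mathcal{A}, w_1|\cdots|w_k)$ as an immersed oriented circle in the plane whose double points are the letters appearing twice inside $w_i$; letters appearing in two distinct components are joined as double points between the corresponding circles. At each double point $A$, the projection $|A|\in\alpha_{*}$ prescribes the over/under information and sign via the same convention, and every remaining transverse intersection forced by planarity is declared virtual. The resulting diagram is well-defined up to detour moves and defines a virtual link; that $\Phi\circ\Psi=\mathrm{id}$ and $\Psi\circ\Phi=\mathrm{id}$ on equivalence classes follows from the already-identified correspondences of moves. The main obstacle is the case analysis for R3: there are multiple oriented versions of the move, each producing a specific triple in $\alpha_{*}^3$, and one must show that the union of these triples together with the triples obtained from them by the auxiliary moves of Lemma~\ref{lemma1} coincides exactly with $S_{*}$. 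Handling the sign conventions uniformly across all oriented R3 configurations is the combinatorial heart of the argument.
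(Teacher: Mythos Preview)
The paper does not give a proof of this statement: it is recorded as a \emph{Fact} with an explicit citation to Section~6.3 of Turaev's paper \cite{Turaev2006}, and no argument is supplied here. So there is no ``paper's own proof'' to compare your proposal against.

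That said, your outline is the standard Gauss-code construction that underlies Turaev's original result, and it is essentially correct. The forward map you describe (recording classical crossings along each component, with the $a/b$ distinction encoding over-first versus under-first and the subscript encoding the sign) is exactly the canonical map; the identifications R1$\leftrightarrow$(H1), R2$\leftrightarrow$(H2), R3$\leftrightarrow$(H3), basepoint-slide$\leftrightarrow\nu_{*}$-shift, and component-transposition$\leftrightarrow\nu_{*}$-permutation are the expected ones; and the inverse via realizing an abstract Gauss phrase by a planar diagram with extra intersections declared virtual is the standard device that makes the correspondence a bijection. The one place to be careful, as you note, is the R3 bookkeeping: the oriented R3 moves do not individually produce all of $S_{*}$, and one must invoke the derived third moves (your Lemma~\ref{lemma1}) together with Lemma~\ref{abab} to see that the homotopy generated on the nanophrase side matches exactly. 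None of this departs from what Turaev does, so your proposal is in line with the cited source even though the present paper offers nothing to compare it to.
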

By Fact~\ref{fact_v}, nanophrases in $\mathcal{P}(\alpha_{*}, \tau_{*}, S_{*}, \nu_*)$ are identified with the set of {\it virtual links}.  Further, in order to see detailed information of links, Turaev also defined {\it pseudolinks} corresponding to a projection image of \emph{virtual links} as follows. 
\begin{notation}\label{def_p}
Let $\alpha_{1}$ $=$ $\{-1, 1\}$ with involutions $\tau_1:$ $1 \mapsto -1$ and $\nu_1$ $=$ $\operatorname{id}$. Let $S_{1}$ $=$ $\{(1, 1, 1)$, $(1, 1, -1)$, $(-1, 1, 1)$, $(-1, -1,- 1)$, $(-1, -1, 1)$, $(1, -1, -1)\}$.   
Nanophrases in $\mathcal{P}(\alpha_{1}, \tau_{1}, S_{1}, \nu_1)$ are called {\it pseudolinks}.  
\end{notation}

\begin{fact}[Section~7.1 of \cite{Turaev2006}]\label{fact_p}
The projection $\alpha_{*} \to$ $\alpha_{1};$  $a_{+}, b_{+} \mapsto 1$ and $a_{-}, b_{-} \mapsto -1$ induces the surjective mapping ${\mathcal P}(\alpha_{*}, \tau_{*}, S_{*}, \nu_{*})$ $\to$ ${\mathcal P}(\alpha_{1}, \tau_{1}, S_{1}, \nu_1)$.  
\end{fact}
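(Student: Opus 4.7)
The plan is to verify that the set-level projection $\pi:\alpha_*\to\alpha_1$ (sending $a_+, b_+\mapsto 1$ and $a_-, b_-\mapsto -1$) is compatible with every piece of structure that defines the equivalence relation on nanophrases, and then to observe that surjectivity is immediate. Concretely, I would first check the following three compatibilities on the level of alphabets: (i) $\pi\circ\tau_*=\tau_1\circ\pi$, which is a four-case verification on the letters $a_\pm,b_\pm$; (ii) $\pi\circ\nu_*=\nu_1\circ\pi$, which is equally routine since $\nu_1=\operatorname{id}$ and $\nu_*$ interchanges the two ``$a/b$'' families; and (iii) $(\pi\times\pi\times\pi)(S_*)\subseteq S_1$, which I would check by going through the six template triples defining $S_*$ and observing that each maps to one of the six triples listed in $S_1$ (and in fact surjects onto $S_1$).

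Next I would define the induced map $\Pi:\mathcal{P}(\alpha_*,\tau_*)\to\mathcal{P}(\alpha_1,\tau_1)$ on nanophrases by keeping the underlying set $\mathcal{A}$ and the underlying word $w$, and replacing the projection $|\cdot|:\mathcal{A}\to\alpha_*$ by $\pi\circ|\cdot|:\mathcal{A}\to\alpha_1$. Compatibility (i) ensures this is a legitimate nanophrase over $(\alpha_1,\tau_1)$, and isomorphisms of $\alpha_*$-alphabets obviously pass to isomorphisms of $\alpha_1$-alphabets. The key step is to show that $\Pi$ descends to homotopy classes, i.e.\ that each generator of the equivalence relation on the source is sent to a generator on the target: move (H1) is purely combinatorial and involves no elements of $\alpha$, so it passes; move (H2) uses the condition $\tau_*(|A|)=|B|$, which by (i) becomes $\tau_1(\pi|A|)=\pi|B|$, exactly the hypothesis of (H2) on the target side; and move (H3) uses the condition $(|A|,|B|,|C|)\in S_*$, which by (iii) becomes $(\pi|A|,\pi|B|,\pi|C|)\in S_1$. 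The $\nu_*$-shift (Definitions~\ref{shiftWord},~\ref{shiftPhrase}) applies $\nu_*$ precisely when a letter is doubled in a component and keeps $|\cdot|$ otherwise; by compatibility (ii) this matches a $\nu_1$-shift after projection. The $\nu_*$-permutation is handled identically using (ii).

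Finally, for surjectivity, given any pseudolink $(\mathcal{B},v)\in\mathcal{P}(\alpha_1,\tau_1,S_1,\nu_1)$ I would lift letter by letter: for each $B\in\mathcal{B}$ pick any $\tilde{B}\in\pi^{-1}(|B|)$ (for example $\tilde{B}=a_+$ if $|B|=1$ and $\tilde{B}=a_-$ if $|B|=-1$). This produces an $\alpha_*$-alphabet $\widetilde{\mathcal{B}}$ and a word $\tilde v$ with $\Pi(\widetilde{\mathcal{B}},\tilde v)=(\mathcal{B},v)$ in $\mathcal{P}(\alpha_1,\tau_1)$, which still represents the same class in $\mathcal{P}(\alpha_1,\tau_1,S_1,\nu_1)$.

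I do not expect a genuine obstacle: the entire argument is a bookkeeping verification that the combinatorial data $(\tau,\nu,S)$ transform correctly under $\pi$. The only spot that deserves care is the case analysis in (iii), because one must be sure every template triple defining $S_*$ lands in the explicit list defining $S_1$; this is where the specific choices of $\tau_*,\nu_*,S_*,\tau_1,S_1$ in Notations~\ref{def_virtual} and~\ref{def_p} are being used in an essential way, rather than being merely compatible formally.
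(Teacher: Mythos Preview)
Your proposal is correct, but there is nothing to compare it against: in the paper this statement is recorded as a \emph{Fact} cited from \cite[Section~7.1]{Turaev2006} and no proof is supplied here. What you have written is exactly the standard verification one would give---checking that $\pi$ intertwines $\tau_*$ with $\tau_1$ and $\nu_*$ with $\nu_1$, that $(\pi\times\pi\times\pi)(S_*)\subset S_1$, and hence that each of (H1)--(H3), the $\nu$-shift, and the $\nu$-permutation on the source descends to the corresponding move on the target, with surjectivity obtained by an arbitrary choice of lift. This is precisely the template the paper later abstracts in Theorem~\ref{thm_functor}, so your argument is in the same spirit as the paper's general machinery even though the paper does not spell it out for this particular fact.
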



\begin{notation}\label{def_s}
Let $\alpha_{0}$ $=$ $\{a, b\}$ with the involutions $\tau_{0}$ $:$ $a \mapsto b$ and $\nu_{0}$ $:$ $a \mapsto b$. Let $S_{0}$ $=$ $\{(a, a, a)$, $(b, b, b)\}$.    
\end{notation}
\begin{fact}[Sections~3.2 together with Remark~6.3 of \cite{Turaev2006}]\label{fact_s}
The  projection $\alpha_{*} \to$ $\alpha_{0};$ $a_{-}, a_{+} \mapsto a$ and $b_{-}, b_{+} \mapsto b$ induces the surjective mapping ${\mathcal P}(\alpha_{*}, \tau_{*}, S_{*}, \nu_{*})$ $\to$ ${\mathcal P}(\alpha_{0}, \tau_{0}, S_{0}, \nu_0)$.  
\end{fact}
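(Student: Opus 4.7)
The plan is to promote the set map $\pi\colon \alpha_{*} \to \alpha_{0}$, $a_{\pm} \mapsto a$, $b_{\pm} \mapsto b$, to a morphism of the homotopy data $(\alpha_{*}, \tau_{*}, S_{*}, \nu_{*}) \to (\alpha_{0}, \tau_{0}, S_{0}, \nu_{0})$, check that the induced assignment on nanophrases descends through every equivalence relation used in defining $\mathcal{P}(-,-,-,-)$, and then observe surjectivity by lifting. At the level of nanophrases, I would send $(\mathcal{A}, w)$ over $\alpha_{*}$ to $(\mathcal{A}, w)$ over $\alpha_{0}$ with projection $|\cdot|_{\alpha_{0}} := \pi \circ |\cdot|_{\alpha_{*}}$. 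This is manifestly compatible with $\alpha$-alphabet isomorphism, so it is well-defined on isomorphism classes of nanophrases.

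The three core compatibility checks are $\pi \circ \tau_{*} = \tau_{0} \circ \pi$, $\pi \circ \nu_{*} = \nu_{0} \circ \pi$, and $\pi^{\times 3}(S_{*}) \subseteq S_{0}$. The first two reduce to direct computations such as $\pi(\tau_{*}(a_{\pm})) = \pi(b_{\mp}) = b = \tau_{0}(a) = \tau_{0}(\pi(a_{\pm}))$ (and symmetrically on $b_{\pm}$), together with the analogous check for $\nu$. The third is immediate because every triple in $S_{*}$ consists of three $a_{\pm}$-letters or three $b_{\pm}$-letters, which $\pi$ sends to $(a,a,a)$ or $(b,b,b)$, both belonging to $S_{0}$.

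These three compatibilities translate directly into preservation of every generator of the equivalence relation. The move (H1) is unconditional, so it is automatic. The move (H2) uses the hypothesis $\tau_{*}(|A|)=|B|$, which implies $\tau_{0}(\pi(|A|))=\pi(|B|)$, so the projected nanophrase admits the same (H2) move. The move (H3) uses $(|A|,|B|,|C|) \in S_{*}$, which gives $(\pi(|A|),\pi(|B|),\pi(|C|)) \in S_{0}$. The $\nu_{*}$-shift and $\nu_{*}$-permutation commute with $\pi$ because $\pi$ intertwines $\nu_{*}$ with $\nu_{0}$; in particular, the rule defining $\mathcal{A}_{w_{i} \cap w_{i+1}}$ is respected letter by letter. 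The induced map $\overline{\pi_{*}}\colon \mathcal{P}(\alpha_{*},\tau_{*},S_{*},\nu_{*}) \to \mathcal{P}(\alpha_{0},\tau_{0},S_{0},\nu_{0})$ is therefore well-defined, and surjectivity follows from fixing any set-theoretic section of $\pi$ (for instance $a \mapsto a_{+}$ and $b \mapsto b_{+}$) and lifting a representative of a given class letter by letter.

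There is no genuine obstacle; the statement is essentially a bookkeeping exercise once $\pi$ is identified as a morphism of homotopy data. The only step that deserves a moment of care is the inclusion $\pi^{\times 3}(S_{*}) \subseteq S_{0}$, and this inclusion is precisely the reason the sets $S_{*}$ in Notation~\ref{def_virtual} and $S_{0}$ in Notation~\ref{def_s} are defined as they are.
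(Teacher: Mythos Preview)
Your argument is correct. The paper itself does not prove this statement: it is recorded as a \emph{Fact} cited from Turaev~\cite{Turaev2006} (Sections~3.2 and Remark~6.3) and is used as background input rather than something established in the present paper. Your proof supplies exactly the verification one would expect---that $\pi$ intertwines $\tau_{*}$ with $\tau_{0}$ and $\nu_{*}$ with $\nu_{0}$, that $\pi^{\times 3}(S_{*})\subseteq S_{0}$, and that a set-theoretic section gives surjectivity---and this is the standard way such a statement is checked. There is nothing to compare against in the paper beyond noting that your direct verification is consistent with (indeed, a special case of) the general functor construction carried out later in Theorem~\ref{thm_functor}.
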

\begin{remark}
Nanophrases in $\mathcal{P}(\alpha_{0}, \tau_{0}, S_{0}, \nu_0)$ correspond to topological objects, which are called \emph{closed virtual strings} \cite{SilverWilliams2006, Turaev2005}.  See also Remark~\ref{s1tos0}.  
\end{remark}
\begin{notation}\label{def_q}
Let $\alpha_{2}$ $=$ $\{c, d\}$ with involutions $\tau_{2}$
$:$ $\operatorname{id}$ and $\nu_{2}$ $:$ $c \mapsto d$. Let $S_{2}$ $=$ $\{(c, c, c)$, $(c, c, d)$, $(d, c, c)$, $(d, d, d)$, $(d, d, c)$, $(c, d, d)\}$.   
Nanophrases in $\mathcal{P}(\alpha_{2}, \tau_{2}, S_{2}, \nu_2)$ are called \emph{quasilinks}.  
\end{notation}
\begin{fact}[Section~7.2 of \cite{Turaev2006}]\label{fact_q}
The  projection $\alpha_{*} \to$ $\alpha_{2};$ $a_{+}, b_{-} \mapsto c$ and $a_{-}, b_{+} \mapsto d$ induces the surjective mapping ${\mathcal P}(\alpha_{*}, \tau_{*}, S_{*}, \nu_{*})$ $\to$ ${\mathcal P}(\alpha_{2}, \tau_{2}, S_{2}, \nu_2)$.  
\end{fact}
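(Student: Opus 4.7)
The plan is to define the induced map at the level of alphabets via the composition $\mathcal{A} \xrightarrow{|\cdot|} \alpha_* \xrightarrow{\pi} \alpha_2$, where $\pi$ denotes the given projection $a_+, b_- \mapsto c$ and $a_-, b_+ \mapsto d$, keeping the underlying Gauss phrase $w$. I would then show that this assignment descends to the stated quotient, and conclude surjectivity by picking, for each letter of a given $\alpha_2$-alphabet, any preimage under $\pi$.

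For well-definedness on the quotient, I would verify three compatibility identities, together with the trivial observations that move (H1) and isomorphisms of $\alpha_*$-alphabets descend. First, $\pi \circ \tau_* = \tau_2 \circ \pi$: since $\tau_2 = \operatorname{id}$, the key observation is that the fibers $\pi^{-1}(c) = \{a_+, b_-\}$ and $\pi^{-1}(d) = \{a_-, b_+\}$ coincide with the $\tau_*$-orbits, so the equality is automatic and move (H2) descends. Second, $\pi \circ \nu_* = \nu_2 \circ \pi$: since $\nu_*$ flips $a \leftrightarrow b$ while preserving the sign and $\nu_2$ swaps $c \leftrightarrow d$, one checks, for instance, $\pi(\nu_*(a_+)) = \pi(b_+) = d = \nu_2(c) = \nu_2(\pi(a_+))$ and the three analogous cases, which guarantees that $\nu_*$-shifts and $\nu_*$-permutations descend to $\nu_2$-shifts and $\nu_2$-permutations. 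Third, $(\pi \times \pi \times \pi)(S_*) \subseteq S_2$: a case-by-case check of the twelve triples in $S_*$, such as $(a_+, a_+, a_-) \mapsto (c, c, d) \in S_2$ and $(b_-, b_+, b_+) \mapsto (c, d, d) \in S_2$, shows that every image lies in $S_2$, so move (H3) descends.

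For surjectivity, given any class represented by $(\mathcal{B}, w)$ in $\mathcal{P}(\alpha_2, \tau_2, S_2, \nu_2)$, I would form an $\alpha_*$-alphabet on the same underlying set $\mathcal{B}$ by sending each $B$ with $|B| = c$ to $a_+$ and each $B$ with $|B| = d$ to $a_-$. Equipped with this new projection, $(\mathcal{B}, w)$ is a nanophrase over $\alpha_*$ whose image under the induced map is the original class.

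The only substantive obstacle is the $S_*$-to-$S_2$ tabulation, which is routine but mechanical; the apparent conceptual difficulty that $\tau_2 = \operatorname{id}$ might seem to clash with the condition $\tau_*(|A|) = |B|$ appearing in (H2) dissolves once one notes that $\pi$ has been chosen precisely so that its fibers coincide with the $\tau_*$-orbits, which is in fact the whole content of the fact.
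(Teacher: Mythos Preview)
The paper does not supply a proof of this statement: it is recorded as a \emph{Fact} with a citation to \cite[Section~7.2]{Turaev2006}, and no argument is given in the present paper. Your proposal therefore cannot be compared against the paper's own proof, because there is none; you are filling in what the authors deliberately left as a reference.

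That said, your argument is correct and is the standard verification one would carry out. The three compatibility checks $\pi\circ\tau_* = \tau_2\circ\pi$, $\pi\circ\nu_* = \nu_2\circ\pi$, and $(\pi\times\pi\times\pi)(S_*)\subseteq S_2$ are exactly what is needed for (H2), the $\nu$-moves, and (H3) to descend, and your observation that the $\tau_*$-orbits coincide with the fibres of $\pi$ is precisely why the choice $\tau_2=\operatorname{id}$ is consistent. The surjectivity argument via an explicit section $c\mapsto a_+$, $d\mapsto a_-$ is also correct. One could note additionally that the paper later subsumes the \emph{pattern} of Facts~\ref{fact_p}, \ref{fact_s}, \ref{fact_q} into the general functorial machinery of Theorem~\ref{thm_functor} (see the sentence immediately preceding that theorem), but that theorem does not literally reprove Fact~\ref{fact_q} either, since for $\alpha=\alpha_*$ the decomposition there yields $L=L_*$ and the component $\mathcal{F}_2$ is trivial.
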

\begin{notation}\label{def_G}
Let $\alpha_{G}$ $=$ $\{ a \}$ with involutions $\tau_{G}$
$:$ $\operatorname{id}$ and $\nu_{G}$ $:$ $\operatorname{id}$. Let $S_{G}$ $=$ $\{(a,a,a)\}$.  
\end{notation}
\begin{remark}
The realization of nanophrases in $\mathcal{P}(\alpha_{G}, \tau_{G}, S_{G}, \nu_G)$ through virtual diagrams is provided in \cite{Manturov2010}, referred to as \emph{free links}. 
\end{remark}

We recall definitions of opposite words and a $\nu$-inversion, and notations $\mathcal{A}_{w}$ and $P_{w}$.  

\begin{definition}[opposite word]
An {\it opposite word} is defined by writing the letters of a word $w$ in the opposite order.   
\end{definition}
\begin{definition}[$\nu$-inversion]\label{inversion}
For a word $w$ on $\mathcal{A}$, let $\mathcal{A}_{w}$ be the same alphabet $\mathcal{A}$ with a new projection $|\cdot|_{w}$ to $\alpha$ defined as follows: for $A \in \mathcal{A}$, set $|A|_{w}$ $=$ $\tau(|A|)$ if $A$ occurs in $w$ once, $|A|_{w}$ $=$ $\nu(|A|)$ if $A$ occurs in $w$ twice, and $|A|_{w}$ $=$ $|A|$ otherwise.   
Let $P$ be a nanophrase $(\mathcal{A}, w_1 | w_2 |\cdots| w_k)$.  Then, the $i$th $\nu$-{\emph{inversion}} of $P$ is an operation consisting of replacements as follows: 
\begin{itemize}
\item Replace $w_i$ with the opposite word $(w_i)^-$.  
\item Replace $\mathcal{A}$ with $A_{w_{i}}$.  
\item The other words in $P$ are preserved.  
\end{itemize}
We call this operation a $\nu$-inversion simply when no confusion may occur.  
We denote the quotient of  $\mathcal{P} (\alpha, \tau, S, \nu)$ by the $\nu$-inversions by $\mathcal{P}_u (\alpha, \tau, S, \nu)$.    
\end{definition}
By Definition~\ref{inversion}, we also call an element of $\mathcal{P}_u (\alpha_1, \tau_1, S_1, \nu_1)$ a \emph{pseudolink} when no confusion may occur.
\begin{notation}\label{notation:Pw}
For a given phrase $P$ on an $\alpha$-alphabet $\mathcal{A}$ and a word $w$ on $\mathcal{A}$, $P_{w}$ denotes the same phrase on the $\alpha$-alphabet $\mathcal{A}_{w}$.  
\end{notation}
\section{Functors on nanophrases}\label{sec7}
We introduce a ``\emph{knotlike}" homotopy, which is a generalization of $S_*$-homotopy.  The link category we know equals to that of nanophrases with $S_*$-homotopy.  

\begin{definition}
Let $\alpha$ be an alphabet with involutions $\tau$ and $\nu$. Assume that $\tau$ and $\nu$ satisfy $\tau\nu = \nu\tau$. 
We say $S_{\sharp} \subset \alpha \times \alpha \times \alpha$
is a \emph{knotlike homotopy data}  if $S_{\sharp}$ is given by the following form:
\begin{eqnarray*}
S_{\sharp} = \{ (a,a,a), (a,a,\nu\tau(a)), (a,\nu\tau(a), \nu\tau(a)) \ | \ a \in \alpha \}.
\end{eqnarray*}
We call an $S$-homotopy $\sim_{S}$ a \emph{knotlike homotopy} if the homotopy data $S$ is a knotlike homotopy data.
\end{definition}

\begin{example}\label{knotlike_ex}
$(1)$ \ Consider the alphabet $\alpha_*$ with the involutions $\tau_*$ and $\nu_*$. Then, $S_*$ is a knotlike homotopy data.
Indeed, by definition, $\tau_*\nu_* = \nu_*\tau_*$. Since $\nu_{*}\tau_{*}(a_{\pm}) = a_{\mp}$ and $\nu_{*}\tau_{*}(b_{\pm}) = b_{\mp}$,
\begin{eqnarray*}
S_* &=& \{ (a_{\pm}, a_{\pm}, a_{\pm}), (a_{\pm}, a_{\pm}, a_{\mp}), (a_{\pm}, a_{\mp}, a_{\mp}), 
(b_{\pm}, b_{\pm}, b_{\pm}), (b_{\pm}, b_{\pm}, b_{\mp}), (b_{\pm}, b_{\mp}, b_{\mp}) \} \\
&=& \{ (a,a,a), (a,a,\nu_*\tau_*(a)), (a, \nu_*\tau_*(a), \nu_*\tau_*(a)) \ | \ a \in \alpha_* \}.
\end{eqnarray*}

$(2)$ \ Consider the alphabet $\alpha_0$ with the involutions $\tau_0$ and $\nu_0$. Then, $S_0$ is a knotlike homotopy data.
Indeed, by definition, $\tau_0\nu_0 = \nu_0\tau_0$. Since $\nu_{0}\tau_{0}(a) = a$ and $\nu_{0}\tau_{0}(b) = b$,
\begin{eqnarray*}
S_0 &=& \{ (a, a, a), (b, b, b)\} \\
&=& \{ (a,a,a), (a,a,\nu_0 \tau_0(a)), (a, \nu_0 \tau_0(a), \nu_0 \tau_0(a)) \ | \ a \in \alpha_0\}.
\end{eqnarray*}

$(3)$ \ Consider the alphabet $\alpha_1$ with the involutions $\tau_1$ and $\nu_1$. Then, $S_1$ is a knotlike homotopy data.
Indeed, by definition, $\tau_1\nu_1 = \nu_1\tau_1$. Since $\nu_{1}\tau_{1}(1) = -1$,
\begin{eqnarray*}
S_1 &=& \{ (\pm1, \pm1, \pm1), (\pm1, \pm1, \mp1), (\pm1, \mp1,\mp1)\} \\
&=& \{ (a,a,a), (a,a,\nu_1 \tau_1(a)), (a, \nu_1 \tau_1(a), \nu_1 \tau_1(a)) \ | \ a \in \alpha_1\}.
\end{eqnarray*}

$(4)$ \ Consider the alphabet $\alpha_2$ with the involutions $\tau_2$ and $\nu_2$. Then, $S_2$ is a knotlike homotopy data.
Indeed, by definition, $\tau_2\nu_2 = \nu_2\tau_2$. Since $\nu_{2}\tau_{2}(c) = d$,
\begin{eqnarray*}
S_2 &=& \{ (c, c, c), (c, c, d), (c, d, d), (d,d,d), (d,d,c), (d,c,c)\} \\
&=& \{ (a,a,a), (a,a,\nu_2 \tau_2(a)), (a, \nu_2 \tau_2(a), \nu_2 \tau_2(a)) \ | \ a \in \alpha_2\}.
\end{eqnarray*}

$(5)$ \ Consider the alphabet $\alpha_G$ with the involutions $\tau_G$ and $\nu_G$. 
Then, $S_G$ is a knotlike homotopy data.
Indeed, by definition, $\tau_G\nu_G = \nu_G\tau_G$. Since $\nu_{G}\tau_{G}(a) = a$,
\begin{eqnarray*}
S_G &=& \{ (a, a, a)\} \\
&=& \{ (a,a,a), (a,a,\nu_G \tau_G(a)), (a, \nu_G \tau_G(a), \nu_G \tau_G(a)) \ | \ a \in \alpha_G\}.
\end{eqnarray*}
\end{example}

Let $\alpha$ be an alphabet and let $\tau$ and $\nu$ be involutions of $\alpha$ and let $S_{\sharp}$ be the knotlike homotopy data.
Assume that $\nu\tau(a) = \tau\nu(a)$ for all $a \in \alpha$.
We define functors form $\mathcal{P}(\alpha, \tau, S_{\sharp}, \nu)$ to $\mathcal{P}(\alpha_{\diamond}, \tau_{\diamond}, S_{\diamond}, \nu_{\diamond})$
for all $\diamond \in \{*, 0, 1, 2, G\}$.

Consider the quotient set $\alpha / \sim_{\tau,\nu}$ where $\sim_{\tau,\nu}$ is the
equivalence relation generated by the following relation: $a \sim b$ if and only if $b = \tau(a)$ or $b = \nu(a)$. Fix a complete system of representative $L$ of the quotient set $\alpha/\sim_{\tau,\nu}$. 
We decompose $L$ to the disjoint union $L_* \cup L_0 \cup L_1 \cup L_2 \cup L_G$ where
\begin{eqnarray*}
L_* &=& \{ a \in L \ | \ a \neq \tau(a), a \neq \nu(a), \tau(a) \neq \nu(a) \}, \\
L_0 &=& \{ a \in L \ | \ a \neq \tau(a), a \neq \nu(a), \tau(a) = \nu(a) \}, \\
L_1 &=& \{ a \in L \ | \ a \neq \tau(a), a = \nu(a) \}, \\
L_2 &=& \{ a \in L \ | \ a = \tau(a), a \neq \nu(a) \},
\\
L_G &=& \{ a \in L \ | \ a = \tau(a), a = \nu(a) \}.  
\end{eqnarray*}
Let $\eta_{\diamond} = L_{\diamond} \cup \tau(L_{\diamond}) \cup \nu(L_{\diamond}) \cup \tau\nu(L_{\diamond})$ and
let $\mathcal{A}$ be an $\eta_{\diamond}$-alphabet for $\diamond \in \{*, 0, 1, 2, G\}$.
Note that $L_{\diamond} \cup \tau(L_{\diamond}) \cup \nu(L_{\diamond}) \cup \tau\nu(L_{\diamond})$ is not necessarily disjoint union. For example, $\tau(L_0) = \nu(L_0)$ and $L_0 = \tau\nu(L_0)$,
$L_1 = \nu(L_1)$ and $\tau(L_1)= \tau\nu(L_1)$.
Then, the following mapping $p_{\diamond} : \mathcal{A} \rightarrow \alpha_{\diamond}$ is well-defined for each $\diamond \in \{*, 0, 1, 2, G\}$:
\begin{eqnarray*}
p_*(A) &=&
\begin{cases}
a_+ \ (|A| \in L_*) \\
a_- \ (|A| \in \nu(L_*)) \\
b_+ \ (|A| \in \tau\nu(L_*)) \\
b_- \ (|A| \in \tau(L_*)),
\end{cases}
\quad
p_0(A) =
\begin{cases}
a \ (|A| \in L_0) \\ 
b \ (|A| \in \tau(L_0)),
\end{cases}
\\
p_1(A) &=&
\begin{cases}
+1 \ (|A| \in L_1) \\ 
-1 \ (|A| \in \tau(L_1)),
\end{cases}
\quad
p_2(A) =
\begin{cases}
c \ (|A| \in L_2) \\ 
d \ (|A| \in \nu(L_2)),
\end{cases}
\\
p_G(A) &=& a \ (|A| \in L_G).
\end{eqnarray*}

\begin{definition}\label{DiamondFunctor}
Let $\alpha$ be an alphabet with involutions $\tau$ and $\nu$. Assume that $\tau\nu(a) = \nu\tau(a)$ for all $a \in \alpha$.
For a nanophrase $(\mathcal{A}, P)$ over $\alpha$, we define 
$\mathcal{F}_{\diamond} : \mathcal{P}(\alpha, \tau) \rightarrow \mathcal{P}(\alpha_{\diamond}, \tau_{\diamond})$, $(\mathcal{A}, P) \mapsto (\mathcal{F}_{\diamond}(\mathcal{A}), \mathcal{F}_{\diamond}(P))$ where $\diamond \in \{*,0,1,2,G\}$
as follows: 

\noindent
(Step 1) Remove $A \in \mathcal{A}$ such that $ |A| \not\in \eta_{\diamond}$ from $(\mathcal{A}, P)$.  

\noindent
(Step 2)  Let $(\mathcal{A}^{\prime}, P^{\prime})$ be the nanophrase over $\eta_{\diamond}$ obtained by removing letters from $(\mathcal{A}, P)$ by using (Step 1).
Then, an $\alpha_{\diamond}$-alphabet $\mathcal{F}_{\diamond}(A)$ is defined by $\mathcal{F}_{\diamond}(\mathcal{A}) = \mathcal{A}^{\prime}$ with the projection $p_{\diamond}$. Moreover, $\mathcal{F}_{\diamond}(P)$ is defined by $P^{\prime}$.
\end{definition}
Theorem~\ref{thm_functor} is a universal functor representing Facts~\ref{fact_p}, \ref{fact_s}, and \ref{fact_q}.  
\begin{theorem}\label{thm_functor}
The map $\mathcal{F}_{\diamond} : \mathcal{P}(\alpha, \tau) \rightarrow \mathcal{P}(\alpha_{\diamond}, \tau_{\diamond})$
induces the map $\mathcal{F}_{\diamond \bullet} : \mathcal{P}(\alpha, \tau, S_{\sharp}, \nu,)$ $\rightarrow$ $\mathcal{P}(\alpha_{\diamond}, \tau_{\diamond}, S_{\diamond}, \nu_{\diamond})$ for every $\diamond \in \{*, 0, 1, 2, G\}$.
\end{theorem}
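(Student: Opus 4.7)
The plan is to verify that $\mathcal{F}_{\diamond}$ respects each of the generating equivalences defining $\mathcal{P}(\alpha,\tau,S_{\sharp},\nu)$: $\alpha$-alphabet isomorphisms, the homotopy moves (H1)--(H3), $\nu$-shifts, and $\nu$-permutations. For each generator I would exhibit its image under $\mathcal{F}_{\diamond}$ as either a single generating equivalence in $\mathcal{P}(\alpha_{\diamond},\tau_{\diamond},S_{\diamond},\nu_{\diamond})$ or the identity. Preservation of isomorphisms is automatic since $p_{\diamond}$ depends only on the projection to $\alpha$.

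The first technical step is to record two structural properties. First, using $\tau\nu=\nu\tau$, one checks that $\eta_{\diamond}=L_{\diamond}\cup\tau(L_{\diamond})\cup\nu(L_{\diamond})\cup\tau\nu(L_{\diamond})$ is closed under both $\tau$ and $\nu$, hence is a union of full $\sim_{\tau,\nu}$-orbits. Second, $p_{\diamond}$ intertwines the involutions: $p_{\diamond}\circ\tau=\tau_{\diamond}\circ p_{\diamond}$ and $p_{\diamond}\circ\nu=\nu_{\diamond}\circ p_{\diamond}$ on $\eta_{\diamond}$; I would verify these identities case by case from the explicit definitions of the $L_{\diamond}$ and of $p_{\diamond}$, keeping track of the collapses $\tau(L_{0})=\nu(L_{0})$, $\nu(L_{1})=L_{1}$, $\tau(L_{2})=L_{2}$ and $L_{G}=\tau(L_{G})=\nu(L_{G})$. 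The intertwining of $\nu\tau$ then follows for free.

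With these in place I would step through the moves. For (H1), either $|A|\in\eta_{\diamond}$, in which case the source move becomes an (H1) in the target after renaming $A$ to $p_{\diamond}(A)$, or $|A|\notin\eta_{\diamond}$ and the two sides of the move have identical image. For (H2), $\tau$-invariance of $\eta_{\diamond}$ gives $|A|\in\eta_{\diamond}\iff|B|\in\eta_{\diamond}$, and the intertwining $p_{\diamond}\tau=\tau_{\diamond}p_{\diamond}$ produces $\tau_{\diamond}(p_{\diamond}(A))=p_{\diamond}(B)$, so we again get (H2) in the target or the trivial move. For (H3) the key observation is that every $S_{\sharp}$-triple uses only elements of the single $\sim_{\tau,\nu}$-orbit $\{a,\nu\tau(a)\}$, so either all three letters have projections in $\eta_{\diamond}$ or none do; when they do, $(p_{\diamond}(A),p_{\diamond}(B),p_{\diamond}(C))\in S_{\diamond}$ follows by combining the intertwining identities with the fact, established in Example~\ref{knotlike_ex}, that each $S_{\diamond}$ is itself knotlike homotopy data. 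Finally, for the $\nu$-shifts and $\nu$-permutations, whenever the source operation renames a letter $A$ by applying $\nu$ the image acquires projection $p_{\diamond}(\nu(|A|))=\nu_{\diamond}(p_{\diamond}(A))$, exactly matching the corresponding $\nu_{\diamond}$-operation in the target; letters with $|A|\notin\eta_{\diamond}$ are removed from both sides and the operation becomes invisible.

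The main obstacle is the bookkeeping in the second step: one has to simultaneously track how $\tau$ and $\nu$ act on each of $L_{\diamond},\tau(L_{\diamond}),\nu(L_{\diamond}),\tau\nu(L_{\diamond})$ and how $\tau_{\diamond},\nu_{\diamond}$ act on $\alpha_{\diamond}$, and to handle the collapses in $\diamond\in\{0,1,2,G\}$ that make the four pieces of $\eta_{\diamond}$ coincide pairwise. Once the intertwining and orbit-closure of $\eta_{\diamond}$ are in hand, each of the moves unfolds as indicated, and the resulting well-defined map is the desired $\mathcal{F}_{\diamond\bullet}$.
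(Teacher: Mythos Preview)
Your proposal is correct and follows essentially the same approach as the paper: both verify invariance of $\mathcal{F}_{\diamond}$ under each generator (isomorphisms, (H1)--(H3), $\nu$-shifts, $\nu$-permutations) by splitting on whether the relevant letters have projections in $\eta_{\diamond}$ and checking compatibility of $p_{\diamond}$ with $\tau,\nu$. The only difference is organizational---you factor out the orbit-closure of $\eta_{\diamond}$ and the intertwining identities $p_{\diamond}\tau=\tau_{\diamond}p_{\diamond}$, $p_{\diamond}\nu=\nu_{\diamond}p_{\diamond}$ as explicit preliminary lemmas, whereas the paper performs these same verifications inline within the case analysis for $\diamond=*$ and then declares the remaining cases analogous.
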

\begin{proof}
We show that $\mathcal{F}_{\diamond}$ is invariant under $S_{\sharp}$-homotopy moves, $\nu$-shift and $\nu$-permutation.
Throughout this proof, $x^{\prime}$ denotes the word by removing characters from $x$ with the projection $p_{\diamond}$ by Definition~\ref{DiamondFunctor}.  

The case of $\diamond = *$.
Consider the first $S_{\sharp}$-homotopy move
\begin{eqnarray*}
P_1 = xAAy \rightarrow xy = P_2.
\end{eqnarray*}
If $|A| \in \eta_{*}$, then $\mathcal{F}_{*}(P_1) = x^{\prime}AAy^{\prime}$ and this is $S_{\sharp}$-homotopic to 
the nanophrase $\mathcal{F}_*(P_2) = x^{\prime}y^{\prime}$. 
If $|A| \not\in \eta_{*}$, then $\mathcal{F}_{*}(P_1) = x^{\prime}y^{\prime} = \mathcal{F}_*(P_2)$. 
Thus, $\mathcal{F}_*$ is invariant under the first $S_{*}$-homotopy move.

Consider the second $S_{\sharp}$-homotopy move
\begin{eqnarray*}
P_1 = xAByBAz \rightarrow xyz = P_2
\end{eqnarray*}
where $|A| = \tau(|B|)$. If $|A| \in L_*$ ($\tau(L_*)$, $\nu(L_*)$, $\tau\nu(L_*)$,~resp.), 
then $|B| \in \tau(L_*)$ ($L_*$, $\tau\nu(L_*)$, $\nu(L_*)$,~resp.).
Thus,
$\mathcal{F}_{*}(P_1) = x^{\prime}ABy^{\prime}BAz^{\prime}$ with the projection $|A| = a_+$ ($b_-$, $a_-$, $b_+$,~resp.) 
and $|B| = b-$ ($a_+$, $b_+$, $a_-$,~resp.). This implies $\mathcal{F}_{*}(P_1)$ is $S_{\sharp}$-homotopic to 
the nanophrase $\mathcal{F}_*(P_2) = x^{\prime}y^{\prime}z^{\prime}$. 
If $|A| \not\in \eta_{*}$, $\tau(|A|)=|B| \not\in \eta_{*}$.  
Then $\mathcal{F}_{*}(P_1) = x^{\prime}y^{\prime}z^{\prime} = \mathcal{F}_*(P_2)$. 
Thus, $\mathcal{F}_*$ is invariant under the second $S_{*}$-homotopy move.

Consider the third $S_{\sharp}$-homotopy move
\begin{eqnarray*}
P_1 = xAByACzBCt \rightarrow xBAyCAzCBt = P_2
\end{eqnarray*}
where $(|A|, |B|, |C|) \in S_{\sharp}$. 
Note that $S_*$ is denoted as $\{ (a,a,a),$ $(a,a,\nu_*\tau_*(a)),$ $(a, \nu_*\tau_*(a), \nu_*\tau_*(a))$ $\ | \ a \in \alpha_* \}$ as in Example~\ref{knotlike_ex}.
Thus, if $(|A|, |B|, |C|)  \in \{ (a,a,a),$ $(a,a,\nu\tau(a)),$ $(a, \nu\tau(a),$ $\nu\tau(a))$ $| \ a \in \eta_* \}$,
then $\mathcal{F}_1(P_1) = x^{\prime}ABy^{\prime}ACz^{\prime}BCt^{\prime}$ and $(|A|, |B|, |C|) \in S_*$ by the definition of $p_* : {\mathcal{A}} \to \alpha_*$ (Example~\ref{knotlike_ex}).  
This implies $\mathcal{F}_{*}(P_1)$ is $S_{*}$-homotopic to the nanophrase $\mathcal{F}_*(P_2) = x^{\prime}BAy^{\prime}CAz^{\prime}CBt^{\prime}$.
If $(|A|, |B|, |C|)  \not\in \{ (a,a,a),$ $(a,a,\nu\tau(a)),$ $(a, \nu\tau(a),$ $\nu\tau(a))$ $| \ a \not\in \eta_* \}$, then 
$(|A|, |B|, |C|)  \in \{ (a,a,a),$ $(a,a,\nu\tau(a)),$ $(a, \nu\tau(a),$ $\nu\tau(a))$ $| \ a \not\in \eta_* \}$, and then $|A|, |B|, |C| \not\in \eta_*$. This implies $\mathcal{F}_*(P_1) = x^{\prime}y^{\prime}z^{\prime}t^{\prime} = \mathcal{F}_*(P_2)$.
Thus, $\mathcal{F}_*$ is invariant under the third $S_{*}$-homotopy move.

Consider the $\nu$-shift
\begin{eqnarray*}
P_1 = AxAy \rightarrow xByB = P_2
\end{eqnarray*}
where $|B| = \nu(|A|)$. If $|A| \in L_*$ ($\tau(L_*)$, $\nu(L_*)$, $\tau\nu(L_*)$,~resp.), 
then $|B| \in \nu(L_*)$ ($\nu\tau(L_*)$, $L_*$, $\tau(L_*)$,~resp.) since $\nu\tau\nu(L_*) = \tau\nu\nu(L_*) = \tau(L_*)$.
Thus,
$\mathcal{F}_{*}(P_1) = Ax^{\prime}Ay^{\prime}$ with the projection $|A| = a_+$ ($b_-$, $b_+$, $a_-$ ,~resp.) and
$\mathcal{F}_{*}(P_2) = x^{\prime}By^{\prime}B$ with the projection $|B| = b_+$ ($a_-$, $a_+$, $b_-$,~resp.).
This implies that $\mathcal{F}_{*}(P_1)$ and $\mathcal{F}_{*}(P_2)$ are related by $\nu_*$-shift moves.
If $|A| \not\in \eta_*$, then $|B| \not\in \eta_*$. This implies that $\mathcal{F}_{*}(P_1) = x^{\prime}y^{\prime} = \mathcal{F}_*(P_2)$. 
Thus, $\mathcal{F}_*$ is invariant under the $\nu$-shift.

Consider the $\nu$-permutation
\begin{eqnarray*}
(\mathcal{A}, P_1 = w_{1}|\cdots|w_{k}) \rightarrow (\mathcal{A}_{w_{i} \cap w_{i+1}}, P_2=w_{1}|\cdots|w_{i-1}|w_{i+1}|w_{i}|w_{i+2}|\cdots|w_{k}).
\end{eqnarray*}
If $|A| \in L_*$ ($\tau(L_*)$, $\nu(L_*)$, $\tau\nu(L_*)$,~resp.), 
then $|B| \in \nu(L_*)$ ($\nu\tau(L_*)$, $L_*$, $\tau(L_*)$,~resp.).
Thus, 
$\mathcal{F}_*(A)_{w_i^{\prime} \cap w_{i+1}^{\prime}} = \mathcal{F}_*(\mathcal{A}_{w_i \cap w_{i+1}})$. 
This implies 
\begin{eqnarray*}
&&(\mathcal{F}_*(\mathcal{A}), \mathcal{F}_*(P_1))\\
&=&(\mathcal{F}_*(\mathcal{A}), w_1^{\prime}|\cdots|w_{i-1}^{\prime}|w_{i}^{\prime}|w_{i+1}^{\prime}|w_{i+2}^{\prime}|\cdots|w_{k}^{\prime})\\
&\rightarrow& (\mathcal{F}_*(\mathcal{A})_{w_{i}^{\prime}\cap w_{i+1}^{\prime}}, w_1^{\prime}| \cdots |w_{i-1}^{\prime}|w_{i+1}^{\prime}|w_{i}^{\prime}|w_{i+2}^{\prime}|\cdots|w_{k}^{\prime})\\
&=& (\mathcal{F}_*(\mathcal{A}_{w_i \cap w_{i+1}}), \mathcal{F}_*(P_2))
\end{eqnarray*}
where the arrow in the third line of the above equation means the $\nu_*$-permutation. 
Thus, $\mathcal{F}_*$ is invariant under the $\nu$-permutation.

By the above, $\mathcal{F}_{*} : \mathcal{P}(\alpha, \tau) \rightarrow \mathcal{P}(\alpha_{*}, \tau_{*})$
induces the map $\mathcal{F}_{* \bullet} : \mathcal{P}(\alpha, \tau, S_{\sharp}, \nu) \rightarrow \mathcal{P}(\alpha_{*}, \tau_{*}, S_{*}, \nu_{*})$.  

The case of $\diamond \in \{0,1,2,G\}$ are proved by a similar argument as above.
\end{proof}
Theorem~\ref{thm_functor} implies corollaries, e.g., including generalization of the Jones polynomial.  
As one of the above functors  $\mathcal{F}_{\diamond}(L)$ implies a covering object of a link $L$.  This allows us to consider the Jones polynomial $J(L)$ as a homotopy invariant of general objects.    
\begin{corollary}\label{coho-alpha_{*}}
The generalized Jones polynomial $J \circ \mathcal{F}_{*}$ is an $S_{\sharp}$-homotopy invariant of nanophrases over an arbitrary $\alpha$.  
\end{corollary}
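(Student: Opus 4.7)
The plan is to derive the corollary directly from Theorem~\ref{thm_functor} together with the already-established invariance of $J$ on the target category, treating the statement as a formal composition of well-defined maps. First I would specialize Theorem~\ref{thm_functor} to $\diamond = *$: it guarantees a well-defined map
\begin{equation*}
\mathcal{F}_{*\bullet}: \mathcal{P}(\alpha, \tau, S_{\sharp}, \nu) \longrightarrow \mathcal{P}(\alpha_{*}, \tau_{*}, S_{*}, \nu_{*}).
\end{equation*}
Concretely, whenever $P_1$ and $P_2$ are $S_{\sharp}$-homotopic nanophrases over $\alpha$ (also allowing $\nu$-shifts and $\nu$-permutations), the images $\mathcal{F}_{*}(P_1)$ and $\mathcal{F}_{*}(P_2)$ represent the same class in $\mathcal{P}(\alpha_{*}, \tau_{*}, S_{*}, \nu_{*})$.

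Next I would invoke Fact~\ref{fact_v} to identify $\mathcal{P}(\alpha_{*}, \tau_{*}, S_{*}, \nu_{*})$ canonically with the set of virtual links. The generalized Jones polynomial $J$, constructed via the Kauffman bracket/pseudolink polynomial in Sections~\ref{jones-by-word} and~\ref{sec6}, is by design an invariant on this target category. Hence $J(\mathcal{F}_{*}(P_1)) = J(\mathcal{F}_{*}(P_2))$, and composing shows that $J \circ \mathcal{F}_{*}$ factors through $\mathcal{P}(\alpha, \tau, S_{\sharp}, \nu)$, which is precisely the desired $S_{\sharp}$-homotopy invariance.

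I do not foresee any serious obstacle, since all the substantive content has already been packaged into Theorem~\ref{thm_functor}; the corollary reduces to a one-line composition argument. The only point worth verifying explicitly is that the version of $J$ used in Section~\ref{jones-by-word} really is invariant on all of $\mathcal{P}(\alpha_{*}, \tau_{*}, S_{*}, \nu_{*})$ (i.e.\ under $\nu_{*}$-shifts and $\nu_{*}$-permutations, not merely under $S_{*}$-homotopy moves); once this is in place the corollary is formal, and in fact exactly the same line of reasoning yields analogous corollaries for $\diamond \in \{0,1,2,G\}$ whenever a target-level invariant is available.
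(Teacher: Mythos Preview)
Your proposal is correct and is exactly the approach the paper intends: the corollary is stated without proof immediately after Theorem~\ref{thm_functor}, so it is meant to follow by composing the induced map $\mathcal{F}_{*\bullet}$ with the known invariance of $J$ on $\mathcal{P}(\alpha_{*},\tau_{*},S_{*},\nu_{*})$. Your caveat about checking invariance of $J$ under $\nu_{*}$-shifts and $\nu_{*}$-permutations is a fair point of hygiene, but the paper takes this for granted via Fact~\ref{fact_v} (the identification with virtual links), so nothing further is needed.
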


\begin{remark}
The category $\mathcal{P}(\alpha_{2},{\mathrm{id}}, S_2, \nu_2)$ fits a quandle theory (cf. \cite{Turaev2006}) including the Alexander module.  
\end{remark}

\section{The Jones polynomial for pseudolinks}\label{jones-by-word}
In order to show how useful functors between different categories is, we shall show that it actually generalizes the cerebrated Jones polynomial of links.  

Turaev defined the Jones polynomial of  pseudolinks using recursive
relations (i.e., skein relations) and Fact~\ref{fact_p} \cite[Section~8]{Turaev2006}; here we consider another way to define the Jones polynomial of pseudolinks.   
We redefine the Jones pseudolink polynomial using less information from $\mathcal{P}(\alpha_1, \tau_1, S_1, \nu_1)$ rather than  $\mathcal{P}(\alpha_*, \tau_*, S_*, \nu_*)$ in the construction process.  
This is because we would like to give an example that shows a specific effectiveness of functors between different categories derived from nanophrases; this approach would be a direction Turaev theory was originally intended to take.  

First, we review Turaev version   \cite{Turaev2006} of the Jones pseudolink polynomial.  
\begin{definition}[Turaev, \cite{Turaev2006}]\label{jones-turaev} 
Let $p$ be a nanophrase in $\mathcal{P}(\alpha_*, \tau_*, \nu_*)$.  
Let $\epsilon(A)$ be $1$ if $|A|$ $\in$ $\{a_{+}, b_{+}\}$ and $-1$
 otherwise.  The \emph{bracket polynomial} $\langle p \rangle$ of nanophrases
 over $\alpha_{*}$ is defined by the following relations.
\begin{equation*}
\begin{split}
\langle P_1 | AwAz | P_2 \rangle &= t^{\epsilon(A)} \langle P_1 | w | z | P_2 \rangle + t^{-\epsilon(A)} \langle (P_1 | w^{-}z | P_2)_{w} \rangle, \\
\langle P_1 | Aw | Az | P_2 \rangle &= t^{\epsilon(A)} \langle P_1 | wz | P_2 \rangle + t^{-\epsilon(A)} \langle (P_1 | w^{-}z | P_2)_{w} \rangle, \\
\langle P_1 | \emptyset | P_2 \rangle &= -(t^{2} + t^{-2}) \langle P_1 | P_2 \rangle \quad {\text{if}}~(P_1 | \emptyset | P_2) \neq (\emptyset) \\
\langle \emptyset \rangle &= 1.  
\end{split}
\end{equation*}
These relations correspond to an oriented version of the Kauffman bracket polynomial  \cite{Kauffman1987}.  
For any pseudolink $\tilde{p}$, the bracket polynomial $\langle \tilde{p} \rangle$ is
 defined by $\langle \tilde{p} \rangle$ $:=$ $\langle p \rangle$ where $p$ is any nanophrase over $\alpha_*$ which is mapped to $\tilde{p}$ by the surjective map in Fact~\ref{fact_p}.  
 Note that the bracket polynomial of pseudolinks does not depend on the choice of $p$.  
 The Jones polynomial of pseudolinks is defined by $(-t)^{-3w(p)}\langle \tilde{p} \rangle$ where $w(p)$ $=$ $\sum_A |A|$ and $A$ runs over all letters of $\tilde{p}$.  
\end{definition}

Next, in the pseudolink category, we define a Kauffman state sum of pseudolinks in our perspective via states of Viro's style \cite{Viro2004}.  
To do this, we will define states of nanophrases and give some lemmas.  
Let $\mathcal{P}_u (\alpha, \tau, \nu)$ be the set of nanophrases over $\alpha$ quotiented by $\nu$-shifts, $\nu$-inversions, and $\nu$-permutations.  
\begin{definition}[marker, state]\label{maker_state}
Let $(\mathcal{A}, P)$ be a nanophrase in $\mathcal{P}_u (\alpha_1, \tau_1, \nu_1)$.     
We assign a sign $-1$ or $1$ to each letter $A \in \mathcal{A}$ and call the sign a
 {\it marker} of $A$, denoted by $\text{mark}(A)$.  
We call a map $\text{mark} : \mathcal{A} \rightarrow \{-1,1\}$ a {\it
 state} of $P$ and denote by $s$.  
\end{definition}

For  a nanophrase $P$ in $\mathcal{P}_u (\alpha_1, \tau_1, \nu_1)$ assigned with a state $s$, we consider the following deformation ($\ast$): 

\begin{equation*}
(\ast)  \begin{cases}
          & w_1|\cdots|AxAy|\cdots|w_{k} \to \begin{cases}
          & w_1|\cdots|x|y|\cdots|w_{k}~\text{if}~{\rm mark}(A) = |A|\\
          & (w_1|\cdots|x^{-}y|\cdots|w_{k})_{x}~\text{if}~{\rm mark}(A) = -|A|
       \end{cases}
       \\
          & w_1|\cdots|Ax|Ay|\cdots|w_{k} \to \begin{cases}
          & w_1|\cdots|xy|\cdots|w_{k}~\text{if}~{\rm mark}(A) = |A|\\
          & (w_1|\cdots|x^{-}y|\cdots|w_{k})_{x}~\text{if}~{\rm mark}(A) = -|A|.  
       \end{cases}
       \end{cases}
\end{equation*}

For $P$, the nanophrase $\emptyset|\cdots|\emptyset$ consisting of some empty words is obtained by repeating  deformations ($\ast$) from $P$.  Then, the length of this nanophrase $\emptyset|\cdots|\emptyset$ is denoted by $|s|$. 

We denote a letter $A$ with $|A| = 1$ and ${\rm mark}(A) = 1$ 
(${\rm mark}(A) = -1$,~resp.) by $A_{+}$ ($A_{-}$,~resp.),
and we denote a letter $A$ with $|A|=-1$ and ${\rm mark}(A) = 1$ 
(${\rm mark}(A) = -1$,~resp.) by $\overline{A}_{+}$ 
($\overline{A}_{-}$,~resp.).
\begin{remark}
One also follows the deformation ($\ast$) is by smoothing crossings of virtual link diagrams.  It would be  easier for some readers to understand.  However, here, the argument is completed in nanophrses, i.e., it is described in a way that does not require extra efforts of generating and drawing virtual link diagrams.  The key is that we apply the deformation ($\ast$) without changing the category we consider from the pseudolink  category.    
\end{remark}
\begin{example}
We give some examples of states $s$ and $|s|$.

$(1)$ \ Consider $P$ $=$ $ABAB$ with $|A|$ $=$ $|B|$ $=$
 $1$.  
Let $s$ be a state defined by $\text{mark}(A)$ $=$ $1$ and
 $\text{mark}(B)$ $=$ $-1$. 
Note that $P$ is represented as $A_{+}B_{-}A_{+}B_{-}$. 
We compute $|s|$.
We apply the deformation $(\ast)$ to $P$, then we obtain
\begin{equation*}
\begin{split}
A_{+}B_{-}A_{+}B_{-} &\stackrel{(\ast)}{\rightarrow} B_{-}|B_{-}\\
 &\stackrel{(\ast)}{\rightarrow} \emptyset.  
\end{split}
\end{equation*} 
Thus, we obtain $|s| = 1$.

If we consider a state $s$ of $P$ defined by $\text{mark}(A)$ $=$ $-1$ and $\text{mark}(B)$ $=$ $1$, 
then we have
\begin{equation*}
\begin{split}
A_{-}B_{+}A_{-}B_{+} &\stackrel{(\ast)}{\rightarrow} \overline{B}_{+}\overline{B}_{+}\\ &\stackrel{(\ast)}{\rightarrow} \emptyset.  
\end{split}
\end{equation*}
Thus, we have $|s| =1$.  

(2) Consider $P$ $=$ $ABACBC$ with $|A|=1$, $|B|=-1$, and $|C|=1$.   Let $s$ be a state defined by $\text{mark}(A)=1$, $\text{mark}(B)=1$, and $\text{mark}(C)=1$.  
\begin{equation*}
\begin{split}
A_{+}\overline{B}_{+}A_{+}C_{+}\overline{B}_{+}C_{+} &\stackrel{(\ast)}{\rightarrow} \overline{B}_{+}|C_{+}\overline{B}_{+}C_{+}\\ &\stackrel{(\ast)}{\rightarrow} C_{+}C_{+}\\ &\stackrel{(\ast)}{\rightarrow}\emptyset|\emptyset.  
\end{split}
\end{equation*}
Thus, $|s| = 2$.  

If we consider a state $s$ of $P$ defined by $\text{mark}(A)=-1$, $\text{mark}(B)=-1$, and $\text{mark}(C)=1$, then we have
\begin{equation*}
\begin{split}
A_{-}\overline{B}_{-}A_{-}C_{+}\overline{B}_{-}C_{+} &\stackrel{(\ast)}{\rightarrow}{B_{-}}C_{+}{B_{-}}C_{+}\\ &\stackrel{(\ast)}{\rightarrow}\overline{C}_{+}\overline{C}_{+}\\ &\stackrel{(\ast)}{\rightarrow}\emptyset.  
\end{split}
\end{equation*}
Thus, $|s| = 1$.
\end{example}

On the notation $P_w$ (Notation~\ref{notation:Pw}), we prepare the following lemma for $w=x, y$ or $xy$ for later.

\begin{lemma}\label{sub}
Let $P$ be a nanophrase over $\alpha_1$ with the shift involution $\nu_1$. 
Let $x$ and $y$ be subwords of $P$. Then, we have the following.\\
$(1)$ \ $(P_{x})_{x} = P$ \\
$(2)$ \ $(P_{x})_{y} = P_{xy}$. 
\end{lemma}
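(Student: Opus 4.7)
The plan is to unfold the definitions and reduce each identity to a per-letter statement about the projection $|\cdot|$ to $\alpha_1$. By Notation~\ref{notation:Pw}, both $(P_x)_x$ and $P$ share the same underlying phrase on the same underlying alphabet $\mathcal{A}$; likewise $(P_x)_y$ and $P_{xy}$ have the same underlying data. So it suffices to verify, for each letter $A \in \mathcal{A}$, the equalities $(|A|_x)_x = |A|$ and $(|A|_x)_y = |A|_{xy}$ in $\alpha_1$. Throughout, let $n_w(A)$ denote the number of occurrences of $A$ in a subword $w$ of $P$; since $P$ is a nanophrase, $n_w(A) \in \{0,1,2\}$, and if $x$ and $y$ are disjoint subwords then $n_{xy}(A) = n_x(A) + n_y(A) \leq 2$.

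For part (1), I would case on $n_x(A) \in \{0,1,2\}$. If $n_x(A) = 0$, then $|A|_x = |A|$ and $(|A|_x)_x = |A|$ because $A$ still has zero occurrences in $x$ (the underlying word $x$ is unchanged by re-projection). If $n_x(A) = 1$, then $|A|_x = \tau_1(|A|)$ and $(|A|_x)_x = \tau_1(\tau_1(|A|)) = |A|$ since $\tau_1$ is an involution. If $n_x(A) = 2$, then $|A|_x = \nu_1(|A|) = |A|$ (because $\nu_1 = \operatorname{id}$ by Notation~\ref{def_p}), and $(|A|_x)_x = \nu_1(\nu_1(|A|)) = |A|$. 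This covers (1).

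For part (2), I would case on the pair $(n_x(A), n_y(A))$ with sum at most $2$. The mixed cases where one of the two is $0$ are immediate: the operator indexed by the empty-count word acts trivially, and on the other side $n_{xy}(A)$ reduces to the nonzero count. When $n_x(A) = 2$ and $n_y(A) = 0$ (or symmetrically), both sides equal $\nu_1(|A|) = |A|$. The only genuinely interesting case is $(n_x(A), n_y(A)) = (1,1)$: then $(|A|_x)_y = \tau_1(\tau_1(|A|)) = |A|$, while $n_{xy}(A) = 2$ forces $|A|_{xy} = \nu_1(|A|) = |A|$, and equality holds precisely because in $\alpha_1$ we have $\tau_1^{\,2} = \operatorname{id} = \nu_1$.

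There is no conceptual obstacle; the only risk is bookkeeping, in particular remembering that the definition of $|\cdot|_w$ looks at occurrence counts of $A$ in the underlying \emph{word} (which is not changed by re-projection), so applying the subscript twice really does iterate the maps $\tau_1$ and $\nu_1$ on $\alpha_1$. The lemma is specific to $\alpha_1$ rather than a general alphabet because the crucial collapse in the $(1,1)$ case requires the coincidence $\tau_1^{\,2} = \nu_1$, which is automatic here since $\nu_1 = \operatorname{id}$.
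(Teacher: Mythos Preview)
Your proof is correct and follows essentially the same approach as the paper's: both reduce to a per-letter check of how the projection changes, exploiting that $\nu_1 = \operatorname{id}$ so that only a single occurrence in the subword triggers a flip by $\tau_1$. The paper packages this as a parity observation (the projection changes iff the letter appears exactly once, so applying twice cancels, and for (2) the XOR of ``once in $x$'' and ``once in $y$'' coincides with ``once in $xy$''), while you enumerate the pairs $(n_x(A), n_y(A))$ explicitly; the content is identical.
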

\begin{proof}
$(1)$ \ By definition, the nanophrase $P_x$ is obtained from the nanophrase $P$ by changing 
the projection of letters that appear only once in the subword $x$ from $1$ ($-1$,~resp.) to $-1$ ($1$,~resp.).
Thus, the projections of letters in $(P_x)_x$ are no different from $P$.

$(2)$ \ Consider a letter $A$ in $P$. By definition, 
the projection of the letter $A$ in $P$ and that of $P_x$ are
different if and only if the letter $A$ appear the word $x$ just once.
the projection of the letter $A$ in $(P_x)_y$ and that of $P_x$ are
different if and only if the letter $A$ appear the word $y$ just once. 
Hence, the projection of the letter $A$ in $(P_x)_y$ and that of $P$ are
different if and only if the letter $A$ appear the word $xy$ just
once. Therefore, we obtain $(P_{x})_{y} = P_{xy}$.
\end{proof}

\begin{lemma}\label{well-def}
$|s|$ is well defined. In other words, $|s|$ does not depend on the order in which letters are deleted.  
\end{lemma}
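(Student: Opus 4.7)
My overall approach is to apply a diamond/Newman-type argument to the reduction system $(\ast)$. This system strictly decreases the number of letters with each application, so every sequence of reductions terminates in a phrase whose components are all empty. Consequently, if I can establish local confluence, then the normal form of $(P, s)$ under $(\ast)$ is unique up to isomorphism in $\mathcal{P}_u(\alpha_1, \tau_1, \nu_1)$, and in particular its number of empty components $|s|$ is independent of the chosen reduction order.

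To prove local confluence, I would fix two distinct letters $A, B$ of $P$ and show that the two compositions $s_A \circ s_B$ and $s_B \circ s_A$ (where $s_X$ denotes the application of $(\ast)$ to the letter $X$) produce the same reduced phrase. A case analysis is required, indexed by (a) whether $A$ and $B$ lie in the same component of $P$ or in different components, (b) the interleaving pattern of their four occurrence positions, and (c) whether $\mathrm{mark}(A) = |A|$ and whether $\mathrm{mark}(B) = |B|$. When both markers match their projections, no reversal or relabeling is triggered, so $s_A$ and $s_B$ act on disjoint pieces of the phrase and commute trivially.

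The delicate situation arises when at least one of $s_A, s_B$ invokes the reversal $(\cdot)^{-}$ together with the projection twist $(\cdot)_x$. Here the subword that is reversed and relabeled in one order differs from that used in the opposite order. Lemma~\ref{sub} is essential for this step: the identities $(P_x)_y = P_{xy}$ and $(P_x)_x = P$ guarantee that the net projection twist is the same in both orders. The combinatorial part of the check then reduces to manipulating the subwords lying between and outside the four occurrences of $A$ and $B$, using associativity of concatenation and the compatibility of $(\cdot)^{-}$ with splittings; any residual mismatch between the two outputs is absorbed by the $\nu$-shift, $\nu$-inversion, and $\nu$-permutation relations defining $\mathcal{P}_u$.

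The main obstacle I expect is the bookkeeping in the most tangled case, for instance a component of the form $\cdots A \cdots B \cdots A \cdots B \cdots$ with $\mathrm{mark}(A) \neq |A|$ and $\mathrm{mark}(B) \neq |B|$, since each of $s_A$ and $s_B$ then simultaneously splits the component and twists the projection of a subword, and I must track carefully which letters end up in which resulting component and how their projections have been modified. Once the two orderings are shown to give identical phrases in $\mathcal{P}_u$ in each case of the analysis, Newman's lemma combined with an induction on the number of letters completes the argument and shows that $|s|$ is well-defined.
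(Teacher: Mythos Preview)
Your proposal is correct and takes essentially the same approach as the paper, which also verifies local confluence of $(\ast)$ by an exhaustive two-letter case analysis (tabulated in Table~\ref{table1}) and absorbs the discrepancies via $\nu$-shifts, $\nu$-inversions, and $\nu$-permutations. One minor caveat: even the cases with $\mathrm{mark}=|\cdot|$ on both letters are not quite ``trivial'' when $A$ and $B$ interleave (e.g.\ $A_{+}xB_{+}yA_{+}zB_{+}t$ still needs a shift to match $yxtz$ with $zyxt$), and the paper streamlines the case count by first observing that the effect of $(\ast)$ depends only on whether $\mathrm{mark}(X)=|X|$, so without loss of generality all projections may be taken equal to $1$.
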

\begin{proof}
The form of the phrase after deleting a letter depends only on whether the marker of the letter and the projection of the letter are equal \footnote{For a letter $X$, four cases are $(\operatorname{mark}(X), |X|)$ $=$ (1, 1), (-1, -1), (-1, 1), and (1, -1).  Results of calculations of case $(1, 1)$ ($(-1, 1)$,~resp.) equal those of $(-1, -1)$ ($(1, -1)$,~resp.), i.e., cases $(1,1)$ and $(-1, 1)$ only are sufficient.}. Therefore, it does not lose generality to consider only the case where projections of letters are equal to 1, that is, the case all letters do not have an overline.  
For such cases, we obtain Table \ref{table1}.  
\begin{table}
\caption{All the types of the nanophrases that should be checked are listed in the left column.  For each nanophrase in each line of the left column, the nanophrase is got after we delete letters $A$ and $B$ in the right column.}
{\begin{tabular}{|c|c|}
\hline
cases & deleting $A$ and $B$ \\
\hline
$A_{+}xA_{+}yB_{+}zB_{+}t$ &  $x|z|ty$ \\
$A_{-}xA_{-}yB_{+}zB_{+}t$ &  $(z|tx^{-}y)_x$ \\
$A_{-}xA_{-}yB_{-}zB_{-}t$ &  $((z^{-}tx^{-}y)_x)_t$ \\
$A_{+}xB_{+}yA_{+}zB_{+}t$  &  $yxtz$ \\
$A_{-}xB_{+}yA_{-}zB_{+}t$ & $(z^{-}xty^{-})_{zy}$ \\
$A_{-}xB_{-}yA_{-}zB_{-}t$ &  $(x^{-}z|ty^{-})_{xy}$ \\
$A_{+}xA_{+}y|B_{+}zB_{+}t$ & $ x|y|z|t$ \\
$A_{-}xA_{-}y|B_{+}zB_{+}t$ & $(x^{-}y|z|t)_{xy}$ \\
$A_{-}xA_{-}y|B_{-}zB_{-}t$ & $((x^{-}y|z^{-}t)_x)_z$ \\
$A_{+}xB_{+}y|A_{+}zB_{+}t$ & $yz|tx$ \\
$A_{-}xB_{+}y|A_{-}zB_{+}t$ & $(z^{-}xty^{-})_{yz}$ \\
$A_{-}xB_{-}y|A_{-}zB_{-}t$ &  $((x^{-}z|ty^{-})_{x})_z$  \\
$A_{+}x|A_{+}yB_{+}zB_{+}t$ & $z|txy$ \\
$A_{-}x|A_{-}yB_{+}zB_{+}t$ & $(z|tx^{-}y)_x$ \\
$A_{-}x|A_{-}yB_{-}zB_{-}t$ & $((z^{-}tx^{-}y)_x)_z$ \\
$A_{+}x|A_{+}y|B_{+}zB_{+}t$ & $ xy|z|t$ \\
$A_{-}x|A_{-}y|B_{+}zB_{+}t$ & $(z|tx^{-}y)_x$ \\ 
$A_{+}x|A_{+}y|B_{-}zB_{-}t$ & $xy|z^{-}t$ \\
$A_{-}x|A_{-}y|B_{-}zB_{-}t$ & $((x^{-}y|z^{-}t)_x)_z$ \\
$A_{+}x|B_{+}y|A_{+}zB_{+}t$ & $ txzy$ \\
$A_{+}x|B_{-}y|A_{+}zB_{-}t$ & $(xzy^{-}t)_y$  \\
$A_{-}x|B_{-}y|A_{-}zB_{-}t$ & $((z^{-}xt^{-}y)_x)_y$  \\
$A_{+}x|A_{+}y|B_{+}z|B_{+}t$ & $xy|zt$  \\
$A_{+}x|A_{+}y|B_{-}z|B_{-}t$ & $(xy|z^{-}t)_z$ \\
$A_{-}x|A_{-}y|B_{-}z|B_{-}t$ & $((x^{-}y|z^{-}t)_x)_z$ \\
\hline
\end{tabular}}
\label{table1}
\end{table}

Consider the case $A_{+}xA_{+}yB_{+}zB_{+}t$.  
If we delete $A$ first, then 
\begin{eqnarray*}
A_{+}xA_{+}yB_{+}zB_{+}t &\longrightarrow& x|yB_{+}zB_{+}t \\
                        &\longrightarrow& x|B_{+}zB_{+}ty \\
                        &\longrightarrow& x|z|ty. \\  
\end{eqnarray*}
If we delete $B$ first, then
\begin{eqnarray*}
A_{+}xA_{+}yB_{+}zB_{+}t &\longrightarrow& B_{+}zB_{+}tA_{+}xA_{+}y \\
                        &\longrightarrow& z|tA_{+}xA_{+}y \\
                        &\longrightarrow& z|A_{+}xA_{+}yt \\  
                        &\longrightarrow& z|x|yt.
\end{eqnarray*}  
Here, $x|z|ty$ is identified with $z|x|yt$ up to $\nu$-shifts,
$\nu$-inversions, and $\nu$-permutations.
Thus, the resulting phrase does not depend on the order of deletions of letters.   

Since we can prove independencies of orders of deletions of letters for each case in Table~\ref{table1}, thus $|s|$ does not depend on the order of deletions of letters.

\end{proof}

\if0
\begin{proof} 
$\bullet$ Consider the case $A_{+}xA_{+}yB_{+}zB_{+}t$. \par
If we delete $A$ first, then
\begin{eqnarray*}
A_{+}xA_{+}yB_{+}zB_{+}t &\longrightarrow& x|yB_{+}B_{+}t \\
                        &\longrightarrow& x|B_{+}B_{+}ty \\
                        &\longrightarrow& x|y|ty \\  
\end{eqnarray*}
If we delete $B$ first, then
\begin{eqnarray*}
A_{+}xA_{+}yB_{+}zB_{+}t &\longrightarrow& B_{+}zB_{+}tA_{+}xA_{+}y \\
                        &\longrightarrow& z|tA_{+}xA_{+}y \\
                        &\longrightarrow& z|A_{+}xA_{+}yt \\  
                        &\longrightarrow& z|x|yt
\end{eqnarray*}  
Thus in this case $|s|$ does not depend on the order of 
deletion of letters.\\
$\bullet$ Consider the case $A_{-}xA_{-}yB_{+}zB_{+}t$. \par
If we delete $A$ first, then
\begin{eqnarray*}
A_{-}xA_{-}yB_{+}zB_{+}t &\longrightarrow& x^{-}yB_{+}zB_{+}t \\
                        &\longrightarrow& B_{+}zB_{+}tx^{-}y \\
                        &\longrightarrow& z|tx^{-}y \\  
\end{eqnarray*}
If we delete $B$ first, then
\begin{eqnarray*}
A_{-}xA_{-}yB_{+}zB_{+}t &\longrightarrow& B_{+}zB_{+}tA_{-}xA_{-}y \\
                        &\longrightarrow& z|tA_{-}xA_{-}y \\
                        &\longrightarrow& z|A_{+}xA_{+}yt \\  
                        &\longrightarrow& z|x^{-}yt
\end{eqnarray*}  
Thus in this case $|s|$ does not depend on the order of 
deletion of letters.\\
$\bullet$ Consider $A_{+}xA_{+}yB_{-}zB_{-}t$..\par
In this case we can prove similar as the case of 
$A_{-}xA_{-}yB_{+}zB_{+}t$.\\

$\bullet$ Consider the case $A_{-}xA_{-}yB_{-}zB_{-}t$. \par
If we delete $A$ first, then

\begin{eqnarray*}
A_{-}xA_{-}yB_{-}zB_{-}t &\longrightarrow& B_{-}zB_{-}tx^{-}y \\
                        &\longrightarrow& z^{-}tx^{-}y   
\end{eqnarray*}

If we delete $B$ first, then

\begin{eqnarray*}
A_{-}xA_{-}yB_{-}zB_{-}t &\longrightarrow& z^{-}tA_{-}xA_{-}y \\
                        &\longrightarrow& A_{-}xA_{y}z^{-}t \\   
                        &\longrightarrow& x^{-}yz^{-}t
\end{eqnarray*}  
Thus in this case $|s|$ does not depend on the order of 
deletion of letters.\\
$\bullet$ Consider the cases $\overline{A}_{\epsilon_{1}}x\overline{A}_{\epsilon_{1}}y
B_{\epsilon_{2}}zB_{\epsilon_{2}}t$, 
where $\epsilon_{1},\epsilon_{2} \in \{+,-\}$.\par
In this case we can prove similarly as the cases of
$A_{-\epsilon_{1}}xA_{-\epsilon_{1}}yB_{\epsilon_{2}}zB_{\epsilon_{2}}t$
.\\ 
$\bullet$ Consider the cases $\overline{A}_{\epsilon_{1}}x\overline{A}_{\epsilon_{1}}
y\overline{B}_{\epsilon_{2}}z\overline{B}_{\epsilon_{2}}t$, 
where $\epsilon_{1},\epsilon_{2} \in \{+,-\}$.\par
In this case we can prove similarly as the case of
$A_{-\epsilon_{1}}xA_{-\epsilon_{1}}y
B_{-\epsilon_{2}}zB_{-\epsilon_{2}}t$. \\
$\bullet$ Consider the case $A_{+}xB_{+}yA_{+}zB_{+}t$.\par
In this case, If we delete $A$ first, then
\begin{eqnarray*}
A_{+}xB_{+}yA_{+}zB_{+}t &\longrightarrow& xB_{+}y|zB_{+}t \\
                        &\longrightarrow& B_{+}yx|B_{+}tz \\
                        &\longrightarrow& yxtz \\  
\end{eqnarray*}

If we delete $B$ first, then

\begin{eqnarray*}
A_{+}xB_{+}yA_{+}zB_{+}t &\longrightarrow& B_{+}yA_{+}zB_{+}tA_{+}x \\
                        &\longrightarrow& yA_{+}z|tA_{+}x \\
                        &\longrightarrow& A_{+}zy|A_{+}xt \\  
                        &\longrightarrow& zyxt
\end{eqnarray*}  
Thus in this case $|s|$ does not depend on the order of 
deletion of letters.\\
$\bullet$ Consider the case $A_{-}xB_{+}yA_{-}zB_{+}t$.\par
In this case, If we delete $A$ first, then
\begin{eqnarray*}
A_{-}xB_{+}yA_{-}zB_{+}t &\longrightarrow& y^{-}\overline{B}_{+}x^{-}z\overline{B}_{+}t \\
                        &\longrightarrow& \overline{B}_{+}x^{-}z\overline{B}_{+}ty^{-} \\
                        &\longrightarrow& z^{-}xty^{-} \\  
\end{eqnarray*}
If we delete $B$ first, then
\begin{eqnarray*}
A_{-}xB_{+}yA_{-}zB_{+}t &\longrightarrow& B_{+}yA_{-}zB_{+}tA_{-}x \\
                        &\longrightarrow& yA_{-}z|tA_{-}x \\
                        &\longrightarrow& A_{-}zy|A_{-}xt \\  
                        &\longrightarrow& y^{-}z^{-}xt
\end{eqnarray*}  
Thus in this case $|s|$ does not depend on the order of 
deletion of letters.\\
$\bullet$ Consider the case $A_{+}xB_{-}yA_{+}zB_{-}t$.\par
In this case we can prove similarly as the case of 
$A_{-}xB_{+}yA_{-}zB_{+}t$.\\
$\bullet$ Consider the case $A_{-}xB_{-}yA_{-}zB_{-}t$.\par
In this case, If we delete $A$ first, then
\begin{eqnarray*}
A_{-}xB_{-}yA_{-}zB_{-}t &\longrightarrow& y^{-}\overline{B}_{-}x^{-}z\overline{B}_{-}t \\
                        &\longrightarrow& \overline{B}_{-}x^{-}z\overline{B}_{-}ty^{-} \\
                        &\longrightarrow& x^{-}z|ty^{-} \\  
\end{eqnarray*}
If we delete $B$ first, then
\begin{eqnarray*}
A_{-}xB_{+}yA_{-}zB_{+}t &\longrightarrow& B_{-}yA_{-}zB_{-}tA_{-}x \\
                        &\longrightarrow& z^{-}\overline{A}_{-}y^{-}t\overline{A}_{-}x \\
                        &\longrightarrow& \overline{A}_{-}y^{-}t\overline{A}_{-}xz^{-} \\  
                        &\longrightarrow& yt|xz^{-}
\end{eqnarray*}  
Thus in this case $|s|$ does not depend on the order of 
deletion of letters.\\
$\bullet$ Consider the cases $\overline{A}_{\epsilon_{1}}xB_{\epsilon_{2}}
y\overline{A}_{\epsilon_{1}}zB_{\epsilon_{2}}t$, where $\epsilon_{1},
\epsilon_{2} \in \{+,-\}$.\par
In this cases we can prove similarly as the cases of 
$A_{-\epsilon_{1}}xB_{\epsilon_{2}}yA_{-\epsilon_{1}}zB_{\epsilon_{2}}t$
.\\
$\bullet$ Consider the cases $\overline{A}_{\epsilon_{1}}x\overline{B}_{\epsilon_{2}}
y\overline{A}_{\epsilon_{1}}z\overline{B}_{\epsilon_{2}}t$, where $\epsilon_{1},
\epsilon_{2} \in \{+,-\}$.\par
In this cases we can prove similarly as the cases of 
$A_{-\epsilon_{1}}xB_{-\epsilon_{2}}yA_{-\epsilon_{1}}zB_{-\epsilon_{2}}t$
.\\
$\bullet$ Consider the case $A_{+}xA_{+}y|B_{+}zB_{+}t$.\par
If we delete $A$ first, then
\begin{eqnarray*}
A_{+}xA_{+}y|B_{+}zB_{+}t &\longrightarrow& x|y|B_{+}zB_{+}t \\
                        &\longrightarrow& x|y|z|t.   
\end{eqnarray*}
If we delete $B$ first, then
\begin{eqnarray*}
A_{+}xA_{+}y|B_{+}zB_{+}t &\longrightarrow& A_{+}xA_{+}y|z|t \\
                        &\longrightarrow& x|y|z|t.\\   
\end{eqnarray*}  
Thus in this case $|s|$ does not depend on the order of 
deletion of letters.\\ 
$\bullet$ Consider the case $A_{-}xA_{-}y|B_{+}zB_{+}t$.\par
If we delete $A$ first, then
\begin{eqnarray*}
A_{-}xA_{-}y|B_{+}zB_{+}t &\longrightarrow& x^{-}y|B_{+}zB_{+}t \\
                        &\longrightarrow& x^{-}y|z|t.   
\end{eqnarray*}
If we delete $B$ first, then
\begin{eqnarray*}
A_{-}xA_{-}y|B_{+}zB_{+}t &\longrightarrow& A_{-}xA_{-}y|z|t \\
                        &\longrightarrow& x^{-}y|z|t.\\   
\end{eqnarray*}  
Thus in this case $|s|$ does not depend on the order of 
deletion of letters.\\
$\bullet$ Consider the case $A_{-}xA_{-}y|B_{+}zB_{+}t$.\par
In this case we can prove similarly as the case of 
$A_{-}xA_{-}y|B_{+}zB_{+}t$.\\
$\bullet$ Consider the case $A_{-}xA_{-}y|B_{-}zB_{-}t$.\par
If we delete $A$ first, then
\begin{eqnarray*}
A_{-}xA_{-}y|B_{-}zB_{-}t &\longrightarrow& x^{-}y|B_{-}zB_{-}t \\
                        &\longrightarrow& x^{-}y|z^{-}t.   
\end{eqnarray*}
If we delete $B$ first, then
\begin{eqnarray*}
A_{-}xA_{-}y|B_{-}zB_{-}t &\longrightarrow& A_{-}xA_{-}y|z^{-}t \\
                        &\longrightarrow& x^{-}y|z^{-}t.\\   
\end{eqnarray*}  
Thus in this case $|s|$ does not depend on the order of 
deletion of letters.\\
$\bullet$ Consider the cases $\overline{A}_{\epsilon_{1}}x\overline{A}_{\epsilon_{1}}y|
B_{\epsilon_{2}}zB_{\epsilon_{2}}t$ and 
 $\overline{A}_{\epsilon_{1}}x\overline{A}_{\epsilon_{1}}y|
\overline{B}_{\epsilon_{2}}z\overline{B}_{\epsilon_{2}}t$  
where $\epsilon_{1},\epsilon_{2}$ $ \in \{+,-\}$.\par
We can prove similarly as the cases of
$A_{-\epsilon_{1}}xA_{-\epsilon_{1}}y|B_{\epsilon_{2}}zB_{\epsilon_{2}}t$
and \\
$A_{-\epsilon_{1}}xA_{-\epsilon_{1}}y|B_{-\epsilon_{2}}zB_{-\epsilon_{2}}t$
respectively.\\
$\bullet$ Consider the case $A_{+}xB_{+}y|A_{+}zB_{+}t$.\par
If we delete $A$ first, then
\begin{eqnarray*}
A_{+}xB_{+}y|A_{+}zB_{+}t &\longrightarrow& xB_{+}yzB_{+}t \\
                         &\longrightarrow& B_{+}yzB_{+}tx \\
                         &\longrightarrow& yz|tx.   
\end{eqnarray*}
If we delete $B$ first, then
\begin{eqnarray*}
A_{+}xB_{+}y|A_{+}zB_{+}t &\longrightarrow& B_{+}yA_{+}x|B_{+}tA_{+}z \\
                        &\longrightarrow& yA_{+}xtA_{+}z\\
                         &\longrightarrow& A_{+}xtA_{+}zy\\
                         &\longrightarrow& xt|zy.\\   
\end{eqnarray*}  
Thus in this case $|s|$ does not depend on the order of 
deletion of letters.\\
 $\bullet$ Consider the case $A_{-}xB_{+}y|A_{-}zB_{+}t$.\par
If we delete $A$ first, then
\begin{eqnarray*}
A_{-}xB_{+}y|A_{-}zB_{+}t &\longrightarrow& y^{-}\overline{B}_{+}x^{-}z\overline{B}_{+}t \\
                         &\longrightarrow& \overline{B}_{+}x^{-}z\overline{B}_{+}ty^{-}\\
                         &\longrightarrow& z^{-}xty^{-}.   
\end{eqnarray*}
If we delete $B$ first, then
\begin{eqnarray*}
A_{-}xB_{+}y|A_{-}zB_{+}t &\longrightarrow& B_{+}yA_{-}x|B_{+}tA_{-}z \\
                        &\longrightarrow& yA_{-}xtA_{-}z\\
                         &\longrightarrow& A_{-}xtA_{-}zy\\
                         &\longrightarrow& t^{-}x^{-}zy.\\   
\end{eqnarray*}  
Thus in this case $|s|$ does not depend on the order of 
deletion of letters.\\
$\bullet$ Consider the case $A_{+}xB_{-}y|A_{+}zB_{-}t$.\par
In this case we can prove similarly as the case of  
$A_{-}xB_{+}y|A_{-}zB_{+}t$.\\
$\bullet$ Consider the case $A_{-}xB_{-}y|A_{-}zB_{-}t$.\par 
\begin{eqnarray*}
A_{-}xB_{-}y|A_{-}zB_{-}t &\longrightarrow& y^{-}\overline{B}_{-}x^{-}z\overline{B}_{-}t \\
                         &\longrightarrow& \overline{B}_{-}x^{-}z\overline{B}_{-}ty^{-}\\
                         &\longrightarrow& x^{-}z|ty^{-}.   
\end{eqnarray*}
If we delete $B$ first, then
\begin{eqnarray*}
A_{-}xB_{-}y|A_{-}zB_{-}t &\longrightarrow& B_{-}yA_{-}x|B_{-}tA_{-}z \\
                        &\longrightarrow& x^{-}\overline{A}_{-}y^{-}t\overline{A}_{-}z\\
                         &\longrightarrow& \overline{A}_{-}y^{-}t\overline{A}_{-}zx^{-}\\
                         &\longrightarrow& y^{-}t|zx^{-}.\\   
\end{eqnarray*} 
Thus in this case $|s|$ does not depend on the order of 
deletion of letters.\\
$\bullet$ Consider the cases $\overline{A}_{\epsilon_{1}}xB_{\epsilon_{2}}y|
\overline{A}_{\epsilon_{1}}zB_{\epsilon_{2}}t$ and 
 $\overline{A}_{\epsilon_{1}}x\overline{B}_{\epsilon_{2}}y|
\overline{A}_{\epsilon_{1}}z\overline{B}_{\epsilon_{2}}t$  
where $\epsilon_{1},\epsilon_{2}$ $ \in \{+,-\}$.\par
We can prove similarly as the cases of
$A_{-\epsilon_{1}}xB_{\epsilon_{2}}y|A_{-\epsilon_{1}}zB_{\epsilon_{2}}t$
and \\
$A_{-\epsilon_{1}}xB_{-\epsilon_{2}}y|A_{-\epsilon_{1}}zB_{-\epsilon_{2}}t$
respectively.\\
$\bullet$ Consider the case $A_{+}x|A_{+}yB_{+}zB_{+}t$.\par
If we delete $A$ first, then
\begin{eqnarray*}
A_{+}x|A_{+}yB_{+}zB_{+}t &\longrightarrow& xyB_{+}zB_{+}t \\
                         &\longrightarrow& B_{+}zB_{+}txy \\
                         &\longrightarrow& z|txy.   
\end{eqnarray*}
If we delete $B$ first, then
\begin{eqnarray*}
A_{+}x|A_{+}yB_{+}zB_{+}t &\longrightarrow& A_{+}x|B_{+}zB_{+}tA_{+}y \\
                        &\longrightarrow& A_{+}x|z|tA_{+}y\\
                         &\longrightarrow& A_{+}x|A_{+}yt|z\\
                         &\longrightarrow& xyt|z.\\   
\end{eqnarray*}  
Thus in this case $|s|$ does not depend on the order of 
deletion of letters.\\
$\bullet$ Consider the case $A_{+}x|A_{+}yB_{+}zB_{+}t$.\par
If we delete $A$ first, then
\begin{eqnarray*}
A_{-}x|A_{-}yB_{+}zB_{+}t &\longrightarrow& x^{-}yB_{+}zB_{+}t \\
                         &\longrightarrow& B_{+}zB_{+}tx^{-}y \\
                         &\longrightarrow& z|tx^{-}y.   
\end{eqnarray*}
If we delete $B$ first, then
\begin{eqnarray*}
A_{-}x|A_{-}yB_{+}zB_{+}t &\longrightarrow& A_{-}x|B_{+}zB_{+}tA_{-}y \\
                        &\longrightarrow& A_{-}x|z|tA_{-}y\\
                         &\longrightarrow& A_{-}x|A_{-}yt|z\\
                         &\longrightarrow& x^{-}yt|z.\\   
\end{eqnarray*}  
Thus in this case $|s|$ does not depend on the order of 
deletion of letters.\\
$\bullet$ Consider the case $A_{+}x|A_{+}yB_{-}zB_{-}t$.\par
If we delete $A$ first, then
\begin{eqnarray*}
A_{+}x|A_{+}yB_{-}zB_{-}t &\longrightarrow& xyB_{-}zB_{-}t \\
                         &\longrightarrow& B_{-}zB_{-}txy \\
                         &\longrightarrow& z^{-}txy.   
\end{eqnarray*}
If we delete $B$ first, then
\begin{eqnarray*}
A_{+}x|A_{+}yB_{-}zB_{-}t &\longrightarrow& A_{+}x|B_{-}zB_{-}tA_{+}y \\
                        &\longrightarrow& A_{+}x|z^{-}tA_{+}y\\
                         &\longrightarrow& A_{+}x|A_{+}yz^{-}t\\
                         &\longrightarrow& xyz^{-}t.\\   
\end{eqnarray*}  
Thus in this case $|s|$ does not depend on the order of 
deletion of letters.\\
$\bullet$ Consider the case $A_{-}x|A_{-}yB_{-}zB_{-}t$.\par
If we delete $A$ first, then
\begin{eqnarray*}
A_{-}x|A_{-}yB_{-}zB_{-}t &\longrightarrow& x^{-}yB_{-}zB_{-}t \\
                         &\longrightarrow& B_{-}zB_{-}tx^{-}y \\
                         &\longrightarrow& z^{-}tx^{-}y.   
\end{eqnarray*}
If we delete $B$ first, then
\begin{eqnarray*}
A_{-}x|A_{-}yB_{-}zB_{-}t &\longrightarrow& A_{-}x|B_{-}zB_{-}tA_{-}y \\
                        &\longrightarrow& A_{-}x|z^{-}tA_{-}y\\
                         &\longrightarrow& A_{-}x|A_{-}yz^{-}t\\
                         &\longrightarrow& x^{-}yz^{-}t.\\   
\end{eqnarray*}  
Thus in this case $|s|$ does not depend on the order of 
deletion of letters.\\
$\bullet$ Consider the case $\overline{A}_{\epsilon_{1}}x|\overline{A}_{\epsilon_{1}}y
B_{\epsilon_{2}}zB_{\epsilon_{2}}t$,
$A_{\epsilon_{1}}x|A_{\epsilon_{1}}y
\overline{B}_{\epsilon_{2}}z\overline{B}_{\epsilon_{2}}t$ and 
$\overline{A}_{\epsilon_{1}}x|\overline{A}_{\epsilon_{1}}y
\overline{B}_{\epsilon_{2}}z\overline{B}_{\epsilon_{2}}t$ is proved similarly as 
the cases of above.\\
$\bullet$ Consider the case $A_{+}x|A_{+}y|B_{+}zB_{+}t$.\par
If we delete $A$ first, then
\begin{eqnarray*}
A_{+}x|A_{+}y|B_{+}zB_{+}t &\longrightarrow& xy|B_{+}zB_{+}t \\
                         &\longrightarrow& xy|z|t.   
\end{eqnarray*}
If we delete $B$ first, then
\begin{eqnarray*}
A_{+}x|A_{+}y|B_{+}zB_{+}t &\longrightarrow& A_{+}x|A_{+}y|z|t \\
                          &\longrightarrow& xy|z|t.\\   
\end{eqnarray*}  
Thus in this case $|s|$ does not depend on the order of 
deletion of letters.\\
$\bullet$ Consider the case $A_{-}x|A_{-}y|B_{+}zB_{+}t$.\par
If we delete $A$ first, then
\begin{eqnarray*}
A_{-}x|A_{-}y|B_{+}zB_{+}t &\longrightarrow& x^{-}y|B_{+}zB_{+}t \\
                         &\longrightarrow& x^{-}y|z|t.   
\end{eqnarray*}
If we delete $B$ first, then
\begin{eqnarray*}
A_{-}x|A_{-}y|B_{+}zB_{+}t &\longrightarrow& A_{-}x|A_{-}|z|t \\
                          &\longrightarrow& x^{-}y|z|t.\\   
\end{eqnarray*}  
Thus in this case $|s|$ does not depend on the order of 
deletion of letters.\\
$\bullet$ Consider the case $A_{+}x|A_{+}y|B_{-}zB_{-}t$.\par
If we delete $A$ first, then
\begin{eqnarray*}
A_{+}x|A_{+}y|B_{-}zB_{-}t &\longrightarrow& xy|B_{-}zB_{-}t \\
                         &\longrightarrow& xy|z^{-}t.   
\end{eqnarray*}
If we delete $B$ first, then
\begin{eqnarray*}
A_{+}x|A_{+}y|B_{-}zB_{-}t &\longrightarrow& A_{+}x|A_{+}y|z^{-}t \\
                          &\longrightarrow& xy|z^{-}t.\\   
\end{eqnarray*}  
Thus in this case $|s|$ does not depend on the order of 
deletion of letters.\\
$\bullet$ Consider the case $A_{-}x|A_{-}y|B_{-}zB_{-}t$.\par
If we delete $A$ first, then
\begin{eqnarray*}
A_{-}x|A_{-}y|B_{-}zB_{-}t &\longrightarrow& x^{-}y|B_{-}zB_{-}t \\
                         &\longrightarrow& x^{-}y|z^{-}t.   
\end{eqnarray*}
If we delete $B$ first, then
\begin{eqnarray*}
A_{-}x|A_{-}y|B_{-}zB_{-}t &\longrightarrow& A_{-}x|A_{-}y|z^{-}t \\
                          &\longrightarrow& x^{-}y|z^{-}t.\\   
\end{eqnarray*}  
Thus in this case $|s|$ does not depend on the order of 
deletion of letters.\\
$\bullet$ Consider the case $\overline{A}_{\epsilon_{1}}x|\overline{A}_{\epsilon_{1}}y
|B_{\epsilon_{2}}zB_{\epsilon_{2}}t$,
$A_{\epsilon_{1}}x|A_{\epsilon_{1}}y|
\overline{B}_{\epsilon_{2}}z\overline{B}_{\epsilon_{2}}t$ and 
$\overline{A}_{\epsilon_{1}}x|\overline{A}_{\epsilon_{1}}y|
\overline{B}_{\epsilon_{2}}z\overline{B}_{\epsilon_{2}}t$ is proved similarly as 
the cases of above.\\
$\bullet$ Consider the case $A_{+}x|B_{+}y|A_{+}zB_{+}t$.\par
If we delete $A$ first, then
\begin{eqnarray*}
A_{+}x|B_{+}y|A_{+}zB_{+}t &\longrightarrow& A_{+}x|A_{+}zB_{+}t|B_{+}y \\
                         &\longrightarrow& xzB_{+}t|B_{+}y\\
                         &\longrightarrow& B_{+}txz|B_{+}y\\
                         &\longrightarrow& txzy.\\
\end{eqnarray*}
If we delete $B$ first, then
\begin{eqnarray*}
A_{+}x|B_{+}y|A_{+}zB_{+}t &\longrightarrow& A_{+}x|B_{+}y|B_{+}tA_{+}z \\
                          &\longrightarrow& A_{+}x|ytA_{+}z\\   
                          &\longrightarrow& A_{+}x|A_{+}zyt\\
                          &\longrightarrow& xzyt.\\
\end{eqnarray*}  
Thus in this case $|s|$ does not depend on the order of 
deletion of letters.\\
$\bullet$ Consider the case $A_{-}x|B_{+}y|A_{-}zB_{+}t$.\par
If we delete $A$ first, then
\begin{eqnarray*}
A_{-}x|B_{+}y|A_{-}zB_{+}t &\longrightarrow& A_{-}x|A_{-}zB_{+}t|B_{+}y \\
                         &\longrightarrow& x^{-}zB_{+}t|B_{+}y\\
                         &\longrightarrow& B_{+}tx^{-}z|B_{+}y\\
                         &\longrightarrow& tx^{-}zy.\\
\end{eqnarray*}
If we delete $B$ first, then
\begin{eqnarray*}
A_{-}x|B_{+}y|A_{-}zB_{+}t &\longrightarrow& A_{-}x|B_{+}y|B_{+}tA_{-}z \\
                          &\longrightarrow& A_{-}x|ytA_{-}z\\   
                          &\longrightarrow& A_{-}x|A_{-}zyt\\
                          &\longrightarrow& x^{-}zyt.\\
\end{eqnarray*}  
Thus in this case $|s|$ does not depend on the order of 
deletion of letters.\\
$\bullet$ Consider the case $A_{-}x|B_{+}y|A_{-}zB_{+}t$.\par
This case is proved similarly as the case of $A_{-}x|B_{+}y|A_{-}zB_{+}t$. \\
$\bullet$ Consider the case $A_{-}x|B_{-}y|A_{-}zB_{-}t$.\par
If we delete $A$ first, then
\begin{eqnarray*}
A_{-}x|B_{-}y|A_{-}zB_{-}t &\longrightarrow& A_{-}x|A_{-}zB_{-}t|B_{+}y \\
                         &\longrightarrow& x^{-}zB_{-}t|B_{-}y\\
                         &\longrightarrow& B_{-}tx^{-}z|B_{-}y\\
                         &\longrightarrow& z^{-}xt^{-}y.\\
\end{eqnarray*}
If we delete $B$ first, then
\begin{eqnarray*}
A_{-}x|B_{-}y|A_{-}zB_{-}t &\longrightarrow& A_{-}x|B_{-}y|B_{-}tA_{-}z \\
                          &\longrightarrow& A_{-}x|y^{-}tA_{-}z\\   
                          &\longrightarrow& A_{-}x|A_{-}zy^{-}t\\
                          &\longrightarrow& t^{-}yz^{-}x.\\
\end{eqnarray*}  
Thus in this case $|s|$ does not depend on the order of 
deletion of letters.\\
$\bullet$ Consider the case $\overline{A}_{\epsilon_{1}}x|B_{\epsilon_{2}}y
|\overline{A}_{\epsilon_{1}}zB_{\epsilon_{2}}t$ and
$\overline{A}_{\epsilon_{1}}x|\overline{B}_{\epsilon_{2}}y|
\overline{A}_{\epsilon_{1}}z\overline{B}_{\epsilon_{2}}t$ is proved similarly as 
the cases of above.\\
$\bullet$ Consider the case $A_{+}x|A_{+}y|B_{+}zB_{+}t$.\par
If we delete $A$ first, then
\begin{eqnarray*}
A_{+}x|A_{+}y|B_{+}z|B_{+}t &\longrightarrow& xy|B_{+}z|B_{+}t \\
                         &\longrightarrow& xy|zt.   
\end{eqnarray*}
If we delete $B$ first, then
\begin{eqnarray*}
A_{+}x|A_{+}y|B_{+}z|B_{+}t &\longrightarrow& A_{+}x|A_{+}y|zt \\
                          &\longrightarrow& xy|zt.\\   
\end{eqnarray*}  
Thus in this case $|s|$ does not depend on the order of 
deletion of letters.\\
$\bullet$ Consider the case $A_{-}x|A_{-}y|B_{+}z|B_{+}t$.\par
If we delete $A$ first, then
\begin{eqnarray*}
A_{-}x|A_{-}y|B_{+}z|B_{+}t &\longrightarrow& x^{-}y|B_{+}z|B_{+}t \\
                         &\longrightarrow& x^{-}y|z|t.   
\end{eqnarray*}
If we delete $B$ first, then
\begin{eqnarray*}
A_{-}x|A_{-}y|B_{+}z|B_{+}t &\longrightarrow& A_{-}x|A_{-}y|zt \\
                          &\longrightarrow& x^{-}y|zt.\\   
\end{eqnarray*}  
Thus in this case $|s|$ does not depend on the order of 
deletion of letters.\\
$\bullet$ Consider the case $A_{+}x|A_{+}y|B_{-}z|B_{-}t$.\par
If we delete $A$ first, then
\begin{eqnarray*}
A_{+}x|A_{+}y|B_{-}z|B_{-}t &\longrightarrow& x^{-}y|B_{-}z|B_{-}t \\
                         &\longrightarrow& xy|z^{-}t.   
\end{eqnarray*}
If we delete $B$ first, then
\begin{eqnarray*}
A_{+}x|A_{+}y|B_{-}z|B_{-}t &\longrightarrow& A_{+}x|A_{+}y|z^{-}t \\
                          &\longrightarrow& xy|z^{-}t.\\   
\end{eqnarray*}  
Thus in this case $|s|$ does not depend on the order of 
deletion of letters.\\
$\bullet$ Consider the case $A_{-}x|A_{-}y|B_{-}z|B_{-}t$.\par
If we delete $A$ first, then
\begin{eqnarray*}
A_{-}x|A_{-}y|B_{-}z|B_{-}t &\longrightarrow& x^{-}y|B_{-}z|B_{-}t \\
                         &\longrightarrow& x^{-}y|z^{-}t.   
\end{eqnarray*}
If we delete $B$ first, then
\begin{eqnarray*}
A_{-}x|A_{-}y|B_{-}z|B_{-}t &\longrightarrow& A_{-}x|A_{-}y|z^{-}t \\
                          &\longrightarrow& x^{-}y|z^{-}t.\\   
\end{eqnarray*}  
Thus in this case $|s|$ does not depend on the order of 
deletion of letters.\\
$\bullet$ Consider the case $\overline{A}_{\epsilon_{1}}x|\overline{A}_{\epsilon_{1}}y
|B_{\epsilon_{2}}z|B_{\epsilon_{2}}t$ and
$\overline{A}_{\epsilon_{1}}x|\overline{A}_{\epsilon_{1}}y|
\overline{B}_{\epsilon_{2}}z|\overline{B}_{\epsilon_{2}}t$ is proved similarly as 
the cases of above.\par
We have now proved the lemma.  
\end{proof} 
\fi


\begin{definition}
For a nanophrase $(\mathcal{A}, P)$ $\in \mathcal{P}(\alpha_1, \tau_1, \nu_1)$ and a state $s$ of $P$,
 we define $[P|s]$ and $[P]$ $\in {\mathbb{Z}}[t, u, d]$ by 
\begin{align*}
[P|s] &:= t^{\sharp\{ A \in \mathcal{A}~|~\text{mark}(A)=1 \}}u^{\sharp\{ A \in \mathcal{A}~|~\text{mark}(A)=-1 \}}d^{|s|-1}, \\ 
[P] &:= \sum_{s} [P|s].  
\end{align*}
\end{definition}

\begin{proposition}\label{propH2}
The polynomial $[P]$ is invariant under the $S_{1}$-homotopy move (H2) for any $P \in \mathcal{P}_u (\alpha_1, \tau_1, \nu_1)$ if and only if $u = t^{-1}$ and $d = -t^{2}-t^{-2}$.  
\end{proposition}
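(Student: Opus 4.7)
The plan is to prove both implications by direct computation on small (H2) instances for necessity, and by matching $[P]$ with Turaev's bracket polynomial of Definition~\ref{jones-turaev} for sufficiency.

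For necessity I would exhibit two concrete (H2) instances. The first is $P_1 = ABBA$ with $|A|=1$ and $|B|=-1$, which is (H2)-reducible to $P_2 = \emptyset$. Enumerating the four states on $\{A,B\}$ and applying the deformation $(\ast)$ yields the loop counts $|s|=2,3,1,2$ for $(++), (+-), (-+), (--)$ respectively, hence
\[
[P_1]=t^{2}d+tud^{2}+ut+u^{2}d,\qquad [P_2]=1.
\]
The second is $P_1'=AB|BA$ with $P_2'=\emptyset|\emptyset$, giving $[P_1']=t^{2}+2tud+u^{2}$ and $[P_2']=d$. Invariance forces both identities, and multiplying the second by $d$ and subtracting the first produces the single constraint $(ut-1)(d^{2}-1)=0$. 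The spurious factor $d^{2}-1$ gives only degenerate possibilities: $d=-1$ leads to $-(t-u)^{2}=1$, while $d=1$ collapses $[P]$ to $(t+u)^{|\mathcal{A}|}$, a trivially (H2)-invariant constant. Ruling these out, one obtains $u=t^{-1}$, and back-substitution into the second identity yields $d=-(t^{2}+t^{-2})$.

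For sufficiency I would show that under $u=t^{-1}$ and $d=-(t^{2}+t^{-2})$ the state sum $[P]$ coincides with Turaev's bracket $\langle P\rangle$ (Definition~\ref{jones-turaev}), which is (H2)-invariant because it is well defined on pseudolinks $\mathcal{P}(\alpha_{1},\tau_{1},S_{1},\nu_{1})$. The identification is proved by induction on $|\mathcal{A}|$. The base case $[\emptyset]=1=\langle\emptyset\rangle$ is immediate, and the empty-word rule $\langle P_{1}|\emptyset|P_{2}\rangle=-(t^{2}+t^{-2})\langle P_{1}|P_{2}\rangle$ matches the state sum because each extra empty component multiplies $d^{|s|-1}$ by $d=-(t^{2}+t^{-2})$. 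For the inductive step, pick any letter $A$ of $P$ and decompose $[P]=t\cdot [P^{(+)}]+u\cdot [P^{(-)}]$, where $P^{(\pm)}$ is obtained from $P$ by applying $(\ast)$ with $\mathrm{mark}(A)=\pm 1$. Inspection of rule $(\ast)$ shows that $P^{(+)}$ is Turaev's split resolution when $|A|=1$ and his join resolution when $|A|=-1$ (with $P^{(-)}$ giving the opposite), so under $u=t^{-1}$ the two terms reproduce the skein relation $\langle\cdots\rangle = t^{\epsilon(A)}\langle\cdots\rangle + t^{-\epsilon(A)}\langle\cdots\rangle$ exactly.

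The main obstacle is verifying that the subscript $(\cdot)_{w}$ in Turaev's joining term precisely tracks the projection updates produced by the $(\ast)$ rule across successive deletions. Lemma~\ref{sub}(2) is essential here, since it guarantees that the iterated $\nu$-inversions produced by consecutive applications of $(\ast)$ compose into a single subscript indexed by the concatenated word, which is exactly the form required in Turaev's recursive definition; order-independence of the deletions (Lemma~\ref{well-def}) then lets one align the inductive step with any convenient choice of $A$.
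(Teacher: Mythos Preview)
Your approach differs substantially from the paper's, which handles both directions with a single computation: it expands $[P_1|ABxBAy|P_2]$ in the two letters $A,B$ (using Lemmas~\ref{sub} and~\ref{well-def}) to obtain
\[
[P_1|ABxBAy|P_2]=(t^{2}+tud+u^{2})\,[(P_1|x^{-}|y|P_2)_x]+ut\,[P_1|xy|P_2],
\]
from which sufficiency is immediate and necessity follows by letting $P_1,x,y,P_2$ vary so that the two bracket factors become independent.

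Your sufficiency argument is logically valid but runs against the purpose of the section. Definition~\ref{jones-turaev} builds $\langle\,\cdot\,\rangle$ on $\mathcal{P}(\alpha_*,\tau_*,\nu_*)$ and then invokes Fact~\ref{fact_p} to descend to pseudolinks; the whole point of Section~\ref{jones-by-word} is to give an \emph{independent} construction entirely inside $\mathcal{P}_u(\alpha_1,\tau_1,\nu_1)$. Citing the well-definedness of Turaev's bracket on pseudolinks therefore imports exactly the result that Proposition~\ref{propH2} and Proposition~\ref{prop_trick} are meant to reprove from scratch. Moreover, your inductive matching $[P]=\langle P\rangle$ is more work than the paper's two-line expansion above, which already yields (H2)-invariance once the substitutions are made.

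Your necessity argument has a genuine gap at the $d=1$ branch. Your two instances give $(t^{2}d+tud^{2}+ut+u^{2}d)=1$ and $t^{2}+2tud+u^{2}=d$, and you correctly derive $(ut-1)(d^{2}-1)=0$. But the case $d=1$ cannot be dismissed as you do: with $d=1$ your second equation forces $(t+u)^{2}=1$, and then $[P]=(t+u)^{|\mathcal{A}|}$ is the constant $\pm 1$, which \emph{is} (H2)-invariant and satisfies both of your equations while $u\neq t^{-1}$. So your two test phrases alone do not isolate $u=t^{-1}$. The paper's generic formula is what supplies the missing leverage: for arbitrary $x,y,P_1,P_2$ the factors $[(P_1|x^{-}|y|P_2)_x]$ and $[P_1|xy|P_2]$ are not tied to a fixed ratio, so the coefficients $t^{2}+tud+u^{2}$ and $ut-1$ are forced to vanish separately (with the usual tacit exclusion of the degenerate constant solution). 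If you want to keep an example-based argument, you need at least one further instance whose bracket is not constant under the $d=1$ specialisation.
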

\begin{remark}
In Proposition~\ref{propH2}, it is not only ``if'' but also ``if and only if'' that is essential.  
\end{remark}
\begin{proof}
By Lemma \ref{well-def}, it is sufficient to show this for if the form of $P = P_{1}|ABxBAy|P_{2}$ with $|A| = \tau_1(|B|)$, where $x$ and $y$ are words not including the character $|$.  

In case $|A|$ $=$ $1$ and $|B|$ $=$ $- 1$, 
by Lemmas~\ref{sub} and \ref{well-def}, we obtain
\begin{eqnarray*} 
[P_{1}|ABxBAy|P_{2}]&=&t[P_{1}|BxB|y|P_{2}]+u[(P_{1}|Bx^{-}By|P_{2})_{x}]\\
                     &=&(t^{2}+tud+u^{2})[(P_{1}|x^{-}|y|P_{2})_{x}]+
                           ut[P_{1}|xy|P_{2}].   
\end{eqnarray*}

In case $|A|$ $=$ $- 1$ and $|B|$ $=$ $1$, 
by Lemmas~\ref{sub} and \ref{well-def}, we obtain

\begin{eqnarray*} 
[P_{1}|ABxBAy|P_{2}]&=&t[(P_{1}|Bx^{-}By|P_{2})_{x}]+u[P_{1}|BxB|y|P_{2}]\\
                     &=&(t^{2}+tud+u^{2})[(P_{1}|x^{-}|y|P_{2})_{x}]+
                           ut[P_{1}|xy|P_{2}].   
\end{eqnarray*}

Thus, in both cases, $[P]$ does not change by the second $S_1$- homotopy move if and only if $t^{2}+tud+u^{2}=0$ and $ut=1$.
In other words, $u=t^{-1}$ and $d=-t^{2}-t^{-2}$.

\end{proof}

Substituting $t^{-1}$ for $u$ and $-t^{2}-t^{-2}$ for $d$, we have \[[P]=\sum_{s}t^{\sigma(s)}(-t^{2}-t^{-2})^{|s|-1},\] 
where $\sigma(s)$ $:=$ $\sharp\{ A \in \mathcal{A}~|~\text{mark}(A)=1 \}$ $-$ $\sharp\{ A \in \mathcal{A}~|~\text{mark}(A)=-1 \}$.  

\begin{remark}
Proposition~\ref{propH2} implies that it follows from the invariance under (H2) to fix the state model \cite{Kauffman1987} of the Jones polynomial. In other words, it has the same structure as the \emph{Kauffman trick} for links (Proposition~\ref{prop_trick}).
\end{remark}
\begin{proposition}\label{prop_trick}
The polynomial $[P]$ is invariant under the $S_{1}$-homotopy move (H3) for any $P \in \mathcal{P}_u (\alpha_1, \tau_1, \nu_1)$.  
\end{proposition}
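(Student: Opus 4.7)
The plan is to carry out the classical Kauffman trick, reducing invariance under (H3) to the invariance under (H2) just supplied by Proposition~\ref{propH2}. By Lemma~\ref{well-def}, one may compute the state sum by performing the deformation $(\ast)$ in any letter-order. Fix an instance of (H3),
\[
P_1 = \cdots|\,xAByACzBCt\,|\cdots \quad\text{and}\quad P_2 = \cdots|\,xBAyCAzCBt\,|\cdots,
\]
with $(|A|, |B|, |C|) \in S_1$, and decompose the state sums according to mark$(A)$:
\[
[P_i] \;=\; t^{|A|}\,[P_i^{A+}] \,+\, t^{-|A|}\,[P_i^{A-}] \qquad (i = 1, 2),
\]
where $P_i^{A\pm}$ denotes the nanophrase obtained from $P_i$ by the deformation $(\ast)$ applied at $A$ with mark$(A) = \pm|A|$.

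Next I would compute the four reduced phrases $P_1^{A\pm}, P_2^{A\pm}$ directly from $(\ast)$. When mark$(A)=|A|$, smoothing at $A$ merely introduces separators around the subword between the two occurrences of $A$; when mark$(A) = -|A|$, it additionally reverses that subword and applies the twist $(\,\cdot\,)_{w}$ of Notation~\ref{notation:Pw}. The key claim is the usual Kauffman dichotomy: one of the pairs, say $(P_1^{A+}, P_2^{A+})$, is already equal as an element of $\mathcal{P}_u(\alpha_1, \tau_1, \nu_1)$ after applying $\nu$-permutations, $\nu$-shifts and $\nu$-inversions, while the other pair $(P_1^{A-}, P_2^{A-})$ differs by a single (H2)-move on the remaining letters $B$ and $C$. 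The knotlike form $S_1 = \{(a,a,a), (a,a,\tau_1(a)), (a,\tau_1(a),\tau_1(a)) : a \in \alpha_1\}$ recorded in Example~\ref{knotlike_ex}(3) is precisely what guarantees that, after the twist, the projections of $B$ and $C$ in the reduced phrase satisfy $|B| = \tau_1(|C|)$, i.e., the hypothesis of (H2). Proposition~\ref{propH2} then gives $[P_1^{A-}] = [P_2^{A-}]$, and together with $[P_1^{A+}] = [P_2^{A+}]$ this yields $[P_1] = [P_2]$.

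The main obstacle will be the bookkeeping with the operator $(\,\cdot\,)_{w}$ in the mark$(A) = -|A|$ resolution: reversing a subword and twisting the projections of its letters changes how the triple $(|A|, |B|, |C|) \in S_1$ appears in the reduced phrase, and this must be tracked carefully in order to land on the hypothesis of the (H2)-move for $B, C$. The compositional identity $(P_x)_y = P_{xy}$ of Lemma~\ref{sub} is the essential tool here. The verification must in principle be made for each of the six triples of $S_1$, but the $\tau_1$-symmetry built into the knotlike data pairs them up so that at most three genuinely different calculations remain, each of the same flavor as—but considerably shorter than—the case analysis already carried out in Lemma~\ref{well-def}.
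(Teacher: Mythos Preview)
Your reduction to Proposition~\ref{propH2} breaks down at the positional level. Take the simplest instance of the triple $(|A|,|B|,|C|)=(1,1,1)$ with $x=y=z=t=\emptyset$, so $P_1=ABACBC$ and $P_2=BACACB$, and resolve at $A$ as you propose. The ``good'' resolutions $P_1^{A+}=B|CBC$ and $P_2^{A+}=C|CBB$ are indeed isomorphic in $\mathcal{P}_u$ via $B\leftrightarrow C$; but the ``bad'' ones are $P_1^{A-}=(BCBC)_{B}$ (so $|B|=-1$, $|C|=1$) and $P_2^{A-}=(CCBB)_{C}$ (so $|C|=-1$, $|B|=1$). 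You correctly observe that in $P_1^{A-}$ the twist forces $|B|=\tau_1(|C|)$, but the \emph{pattern} of $B,C$ in $P_1^{A-}$ is $BC\cdots BC$, i.e.\ the configuration of Lemma~\ref{abab}, not the $BC\cdots CB$ configuration required by (H2). No (H2) move is applicable to $P_1^{A-}$ at all (shifting to $CBCB$ does not help), so Proposition~\ref{propH2} gives you nothing here. The phrase $P_2^{A-}$, by contrast, shifts to $CBBC$ and \emph{does} reduce by (H2); thus the two sides are not ``one (H2) move apart'' in any sense. One can check directly that $[P_1^{A-}]=t^2+d+1+t^{-2}$ while $[P_2^{A-}]=t^2d+d^2+1+t^{-2}d$: they agree only after the substitution $d=-t^2-t^{-2}$, and this coincidence is precisely what has to be proved, not assumed. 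Invoking Lemma~\ref{abab} to kill the $BCBC$ block would be circular, since that lemma already uses (H3). Resolving at $B$ or $C$ instead runs into the same obstruction for at least one triple in $S_1$.

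This is why the paper abandons the one-crossing Kauffman trick and instead expands both sides over all eight marker assignments for $A,B,C$ simultaneously. The matching of terms then rests on the scalar identity $2t^{|A|}+t^{-3|A|}+t^{-|A|}(-t^{2}-t^{-2})=t^{|A|}$, which absorbs exactly the $BCBC$-versus-$CCBB$ discrepancy above; four placements of the separators $|$ have to be checked, and the remaining triples of $S_1$ are handled analogously. Your sketch would become a proof only if you could supply, independently of (H3), the equality $[xAByABz]=[xyz]$ whenever $|B|=\tau_1(|A|)$; that is essentially as much work as the direct computation.
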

\begin{proof}
First, we consider the case of 
$(|A|$, $|B|$, $|C|)$ $=$ $(\pm 1,$ $\pm 1$, $\pm 1)$.  Consider the third $S_1$-homotopy move
$$P_{1}|ABxACyBCz|P_{2} \longrightarrow P_{1}|BAxCAyCBz|P_{2}.$$
By Definition \ref{maker_state}, deformation$(\ast)$ and Lemma \ref{sub}, we obtain
\begin{eqnarray*}
[P_{1}|ABxACyBCz|P_{2}] &=& t^{3|A|}[P_{1}|xy|z|P_{2}]\\
                          &+&
(2t^{|A|}+t^{-3|A|}+t^{-|A|}(-t^{2}-t^{-2})
)[(P_{1}|zx^{-}y^{-}|P_{2})_{xy}]\\
                           &+&
t^{|A|}[(P_{1}|x^{-}y|z|P_{2})_x]\\ 
                           &+&
t^{-|A|}[(P_{1}|xy^{-}z|P_{2})_y]\\
&+&
t^{-|A|}[(P_{1}|x^{-}yz|P_{2})_x]
\end{eqnarray*}
and
\begin{eqnarray*}
[P_{1}|BAxCAyCBz|P_{2}] &=& t^{3|A|}[P_{1}|xy|z|P_{2}]\\
                          &+&\!\!
(2t^{|A|}+t^{-3|A|}+t^{-|A|}(-t^{2}-t^{-2})
)[(P_{1}|x^{-}y|z|P_{2})_x]\\
                           &+&
t^{|A|}[(P_{1}|x^{-}y^{-}z|P_{2})_{xy}]\\ 
                           &+&
t^{-|A|}[(P_{1}|xy^{-}z|P_{2})_y]\\
&+&
t^{-|A|}[(P_{1}|z^{-}y^{-}x|P_{2})_{yz}].
\end{eqnarray*}
Note that since $|A| \in \{-1, +1\}$, we obtain
$$2t^{|A|}+t^{-3|A|}+t^{-|A|}(-t^{2}-t^{-2})
  =t^{|A|}.$$
Therefore, 
$[P_{1}|ABxACyBCz|P_{2}]$ is equal to $[P_{1}|BAxCAyCBz|P_{2}]$.\par
Consider the third $S_1$-homotopy move
$$P_{1}|ABx|ACyBCz|P_{2} \longrightarrow P_{1}|BAx|CAyCBz|P_{2}.$$
Then 
\begin{eqnarray*}
[P_{1}|ABx|ACyBCz|P_{2}] &=& t^{3|A|}[P_{1}|xzy|P_{2}]\\
                          &+&
(2t^{|A|}+t^{-3|A|}+t^{-|A|}(-t^{2}-t^{-2})
)[(P_{1}|zx^{-}y^{-}|P_{2})_{xy}]\\
                           &+&
t^{-|A|}[(P_{1}|x^{-}|y^{-}z|P_{2})_{xy}]\\ 
                           &+&
t^{|A|}[(P_{1}|xzy^{-}|P_{2})_y]\\
&+&
t^{-|A|}[(P_{1}|x^{-}yz|P_{2})_x]
\end{eqnarray*}
and
\begin{eqnarray*}
[P_{1}|BAx|CAyCBz|P_{2}] &=& t^{3|A|}[P_{1}|xyz|P_{2}]\\
                          &+&
(2t^{|A|}+t^{-3|A|}+t^{-|A|}(-t^{2}-t^{-2})
)[(P_{1}|z^{-}x^{-}y|P_{2})_{xz}]\\
                           &+&
t^{-|A|}[(P_{1}|x^{-}yz|P_{2})_x]\\ 
                           &+&
t^{-|A|}[(P_{1}|x^{-}|y^{-}z|P_{2})_{xy}]\\
&+&
t^{|A|}[(P_{1}|x^{-}y^{-}z|P_{2})_xy].
\end{eqnarray*}
Therefore, $[P_{1}|ABx|ACyBCz|P_{2}]$ is equal to $[P_{1}|BAx|CAyCBz|P_{2}]$.\par
Consider the third $S_1$-homotopy move
$$P_{1}|ABxACy|BCz|P_{2} \longrightarrow P_{1}|BAxCAy|CBz|P_{2}.$$
Then 
\begin{eqnarray*}
[P_{1}|ABxACy|BCz|P_{2}] &=& t^{3|A|}[P_{1}|xzy|P_{2}]\\
                          &+&
(2t^{|A|}+t^{-3|A|}+t^{-|A|}(-t^{2}-t^{-2})
)[(P_{1}|xy^{-}z|P_{2})_y]\\
                           &+&
t^{-|A|}[(P_{1}|x^{-}y^{-}z|P_{2})_{xy}]\\ 
                           &+&
t^{|A|}[(P_{1}|x^{-}yz|P_{2})_x]\\
&+&
t^{-|A|}[(P_{1}|yx^{-}|z|P_{2})_x]
\end{eqnarray*}
and
\begin{eqnarray*}
[P_{1}|BAxCAy|CBz|P_{2})] &=& t^{3|A|}[P_{1}|xyz|P_{2}]\\
                          &+&
(2t^{|A|}+t^{-3|A|}+t^{-|A|}(-t^{2}-t^{-2})
)[(P_{1}|x^{-}yz|P_{2})_{x}]\\
                           &+&
t^{-|A|}[(P_{1}|x^{-}y^{-}z|P_{2})_{xy}]\\ 
                           &+&
t^{-|A|}[(P_{1}|y^{-}x|z|P_{2})_{y}]\\
&+&
t^{|A|}[(P_{1}|xy^{-}z|P_{2})_y].
\end{eqnarray*}
Therefore, $[P_{1}|ABxACy|BCz|P_{2}]$ is equal to $[P_{1}|BAxCAy|CBz|P_{2}]$.\par
Consider the third $S_1$-homotopy move
$$P_{1}|ABx|ACy|BCz|P_{2} \longrightarrow P_{1}|BAx|CAy|CBz|P_{2}.$$
Then 
\begin{eqnarray*}
[P_{1}|ABx|ACy|BCz|P_{2}] &=& t^{3|A|}[P_{1}|y|zx|P_{2}]\\
                          &+&
(2t^{|A|}+t^{-3|A|}+t^{-|A|}(-t^{2}-t^{-2})
)[(P_{1}|y^{-}zx|P_{2})_y]\\
                           &+&
t^{-|A|}[(P_{1}|x^{-}|zy^{-}|P_{2})_{xy}]\\ 
                           &+&
t^{|A|}[(P_{1}|xzy^{-}|P_{2})_y]\\
&+&
t^{-|A|}[(P_{1}|yx^{-}|z|P_{2})_x]
\end{eqnarray*}
and
\begin{eqnarray*}
[P_{1}|BAx|CAy|CBz|P_{2}] &=& t^{3|A|}[P_{1}|yz|x|P_{2}]\\
                          &+&
(2t^{|A|}+t^{-3|A|}+t^{-|A|}(-t^{2}-t^{-2})
)[(P_{1}|y^{-}xz|P_{2})_y]\\
                           &+&
t^{-|A|}[(P_{1}|zy^{-}|x^{-}|P_{2})_{xy}]\\ 
                           &+&
t^{-|A|}[(P_{1}|x^{-}y|z|P_{2})_x]\\
&+&
t^{|A|}[(P_{1}|z^{-}yx^{-}|P_{2})_{xz}].
\end{eqnarray*}
Therefore, $[P_{1}|ABx|ACy|BCz|P_{2}]$ is equal to $[P_{1}|BAx|CAy|CBz|P_{2}]$.\par
The cases of $(|A|,$ $|B|,$ $|C|)$ $=$ $(\mp 1,$ $\pm 1,$ $\pm 1)$ and $(|A|,$ $|B|,$ $|C|)$ $=$ $(\pm 1,$ $\pm 1,$ $\mp 1)$ are proved in a similar way as the above cases.
\end{proof}
\begin{theorem}
For $(\mathcal{A}, P) \in \mathcal{P}_u (\alpha_1, \tau_1, S_1, \nu_1)$, we have an $S_1$-homotopy invariant by
\begin{equation}\label{jones}
J(P) = (-t)^{-3w(P)}\sum_{s:\text{states}} t^{\sigma(s)} (- t^{2} - t^{-2})^{|s|-1}, 
\end{equation}
where $w(P)$ $=$ $\sum_{A \in \mathcal{A}}|A|$.       
\end{theorem}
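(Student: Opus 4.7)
Since $P \in \mathcal{P}_u(\alpha_1,\tau_1,S_1,\nu_1)$, the equivalences generated by $\nu$-shifts, $\nu$-inversions, and $\nu$-permutations are already factored out, and by Lemma~\ref{well-def} the quantity $[P]$ descends to a well-defined function on this quotient. So the plan is to verify invariance of $J(P) = (-t)^{-3w(P)}[P]$ under each of the $S_1$-homotopy moves (H1), (H2), (H3); the remaining equivalences on $\mathcal{P}_u$ are already respected.

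For (H2) and (H3) the hard work is already in place. Proposition~\ref{propH2} gives invariance of $[P]$ under (H2) once one fixes $u=t^{-1}$ and $d=-t^2-t^{-2}$, and Proposition~\ref{prop_trick} gives invariance under (H3). I also need to check the normalizing factor: move (H2) removes a pair $A,B$ with $|B|=\tau_1(|A|)=-|A|$, so $|A|+|B|=0$ and $w(P)$ is preserved; move (H3) does not delete any letter, so $w(P)$ is again preserved. Hence $(-t)^{-3w(P)}$ is unchanged and $J$ is invariant under (H2) and (H3).

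The only remaining case is (H1). Take $P = P_1 | xAAy | P_2$ and $P' = P_1 | xy | P_2$. Each state $s'$ of $P'$ extends to exactly two states $s^+,s^-$ of $P$ according to $\mathrm{mark}(A)=\pm 1$. Using Lemma~\ref{well-def}, I may compute $|s^{\pm}|$ by deleting $A$ first, and using a $\nu$-shift to cycle the component containing $AA$ into the form $AAy'$. The deformation $(\ast)$ then gives two cases: if $\mathrm{mark}(A)=|A|$ an empty component splits off, so $|s|$ increases by $1$; if $\mathrm{mark}(A)=-|A|$, the component just loses $AA$ and $|s|$ is unchanged. Summing the two contributions,
\begin{equation*}
[P] \;=\; \bigl(t^{|A|}\cdot d \;+\; t^{-|A|}\bigr)\,[P'] \;=\; -t^{3|A|}\,[P'],
\end{equation*}
where the final equality uses $d=-t^2-t^{-2}$ (the two sign choices of $|A|$ both give $(-t^3)^{|A|}$). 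Since $w(P)=w(P')+|A|$, I have $(-t)^{-3w(P)} = (-t)^{-3|A|}(-t)^{-3w(P')}$, and $(-t)^{-3|A|}\cdot(-t^3)^{|A|} = 1$; the writhe correction exactly absorbs the (H1) factor, so $J(P)=J(P')$.

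The main obstacle is really the bookkeeping for (H1): one needs to verify that the bijection between states of $P$ and $P'$ behaves correctly with respect to both $\sigma(s)$ and $|s|$ in all four sign combinations of $(|A|,\mathrm{mark}(A))$, and that the $\nu$-shift used to cycle $AA$ to the front of its component does not alter $[P]$ (which is guaranteed by passage to $\mathcal{P}_u$ together with Lemma~\ref{well-def}). Once this is set up, the final algebraic identity is a one-line check, and combined with the previous two propositions the theorem follows.
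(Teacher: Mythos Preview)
Your argument is correct and follows the paper's route: invariance under (H2) and (H3) is inherited from Propositions~\ref{propH2} and~\ref{prop_trick}, together with the easy check that $w(P)$ is unchanged by those moves. The paper in fact states the theorem immediately after these two propositions without a separate proof, leaving the (H1) verification implicit; your explicit computation $[P]=(t^{|A|}d+t^{-|A|})[P']=-t^{3|A|}[P']$ and the cancellation against $(-t)^{-3|A|}$ is exactly the standard writhe-normalization step the reader is expected to supply, so your write-up is a faithful completion of the paper's argument rather than a different approach.
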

This formula (\ref{jones}) gives a definition of the Jones polynomial of $P \in \mathcal{P}_u (\alpha_1, \tau_1, S_1, \nu_1)$.

\begin{remark}\label{s1tos0}
Every $S_{1}$-homotopy invariant of pseudolinks is an $S_{0}$-homotopy invariant of nanophrases over $\alpha_{0}$ by using $S_{0}$ $\subset$ $S_{1}$ and the fact that the bijection $f : \alpha_0$ $\to$ $\alpha_1;$ $a \mapsto 1$ and $b \mapsto -1$ commutes  with the involutions, that is, $f \circ \tau_0$ $=$ $\tau_1 \circ f$ (cf.~Section~7.3, Remark~7.2 of \cite{Turaev2006}).  
\end{remark}
\begin{corollary}\label{jones-s0}
$J(P)$ is an $S_{0}$-homotopy invariant for nanophrases $P$ over $\alpha_{0}$.  
\end{corollary}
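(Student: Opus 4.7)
The plan is to deduce the corollary from the $S_1$-homotopy invariance of $J$ established in the preceding theorem, by transporting nanophrases over $\alpha_0$ to nanophrases over $\alpha_1$ via the bijection of Remark~\ref{s1tos0}. Specifically, I would introduce the bijection $f : \alpha_0 \to \alpha_1$, $a \mapsto 1$, $b \mapsto -1$, which satisfies $f \circ \tau_0 = \tau_1 \circ f$, and use it to induce a map $f_* : \mathcal{P}(\alpha_0, \tau_0) \to \mathcal{P}(\alpha_1, \tau_1)$ by relabelling each $\alpha_0$-alphabet projection through $f$. The value of $J$ on a nanophrase $P$ over $\alpha_0$ is then understood as $J(f_*(P))$, which is consistent with the displayed formula (\ref{jones}) since the writhe $w(P) = \sum_A |A|$ and the state-sum quantity $\sigma(s)$ both depend on $|A|$ only through its numerical image under $f$.

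Next, I would verify that each $S_0$-homotopy move on $P$ corresponds under $f_*$ to a legitimate $S_1$-homotopy move on $f_*(P)$. Move (H1) requires no compatibility with the involutions and is transported trivially. Move (H2) requires the condition $\tau_0(|A|) = |B|$; because $f$ commutes with the involutions, this passes to $\tau_1(f(|A|)) = f(|B|)$, which is precisely the hypothesis needed for an $S_1$-version of (H2). Move (H3) requires $(|A|,|B|,|C|) \in S_0$, and a direct inspection gives
\[
f(S_0) = \{(1,1,1),\,(-1,-1,-1)\} \subset S_1,
\]
so every such triple is admissible for an (H3) move after applying $f_*$.

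Combining these observations, whenever $P \simeq_{S_0} Q$ over $\alpha_0$ we have $f_*(P) \simeq_{S_1} f_*(Q)$ over $\alpha_1$. By Propositions~\ref{propH2} and \ref{prop_trick} together with the preceding theorem, $J(f_*(P)) = J(f_*(Q))$, which yields $J(P) = J(Q)$ and completes the argument. The proof is essentially a bookkeeping step: the only content is the containment $f(S_0) \subset S_1$ and the intertwining $f\tau_0 = \tau_1 f$, both of which are already recorded in Remark~\ref{s1tos0}, so I do not anticipate any real obstacle.
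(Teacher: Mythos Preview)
Your argument is correct and is exactly the approach the paper intends: the corollary has no separate proof in the paper, being an immediate consequence of Remark~\ref{s1tos0}, and you have simply spelled out that remark in detail (the intertwining $f\tau_0=\tau_1 f$ and the containment $f(S_0)\subset S_1$). There is nothing to add.
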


\section{A Jones polynomial of nanophrases over any {$\alpha$}}\label{sec6}
In Section~\ref{jones-by-word}, we discuss the $S_{1}$-homotopy invariant $J(P)$ of pseudolinks.  
In this section, we extend the Jones polynomial $J(P)$ to homotopy invariants of nanophrases over any $\alpha$. 

Let $\alpha$ be an alphabet, $\tau : \alpha \to \alpha$ an involution, and $\Delta_{\alpha}$ $=$ $\{(a, a, a)\}_{a \in \alpha}$.  We consider a complete system of representative $\{a_{1}, a_{2}, \dots, a_{m}\}$ of $\alpha/\tau$ and denote $\{a_{1}, a_{2}, \dots, a_{m}\}$ by ${\rm crs}(\alpha/\tau)$.  
 
\begin{definition}[\cite{Turaev2007}]\label{orbit}
For an alphabet $\alpha$ and an involution $\tau$, 
the {\it orbit} of the involution $\tau:$ $\alpha \to \alpha$ is a subset of $\alpha$ consisting either of one element preserved by $\tau$ or of two elements permuted by $\tau$; in the latter case, the orbit is {\it free}.  
\end{definition}
\begin{definition}
For $A \in \mathcal{A}$, we define the ${\rm sign}$ of $A$ by 
\begin{equation}
{\rm sign}_{L} (A) := \begin{cases}
                   & 1~\text{if}~|A| \in L ; \widetilde{|A|}: \text{a free orbit}\\
                   & -1~\text{if}~|A| \in \tau(L) ; \widetilde{|A|}: \text{a free orbit}\\
                   & 0~\ \ \text{otherwise}
                  \end{cases} 
\end{equation}
where $L$ is a nonempty subset of ${\rm crs}(\alpha/\tau)$. 
\end{definition}

Recall $\mathcal{P}(\alpha, \tau)$ that is a set of nanophrases over $\alpha$ with $\tau$.  

\begin{definition}
For any $(\alpha, \tau)$ and any subset $L \subset {\rm crs}(\alpha/\tau)$, the map 
$\mathcal{U}_{L}:$ $\mathcal{P}(\alpha, \tau) \to \mathcal{P}(\alpha_{0}, \tau_{0});$ $P \mapsto P_{0}$ is defined by the following two steps:

(Step 1) Remove $A \in \mathcal{A}$ such that ${\rm sign}_{L}(A) = 0$ from $(\mathcal{A}, P)$ $\in \mathcal{P}(\alpha, \tau)$.  

(Step 2) Let the nanophrase be $(\mathcal{A}', P')$ after removing letters from $(\mathcal{A}, P)$ by using (Step 1).  
For $\mathcal{A}'$, choose an appropriate alphabet $\mathcal{B}$ and consider a map $f$ sending $\mathcal{A}'$  to $\mathcal{B}$ such that
\begin{equation}
\begin{cases}
          &\text{transform}~$A$~\text{with}~{\rm sign}_{L}(A) = 1~\text{into}~B \in \mathcal{B}~\text{with}~|B| = 1,\\
          &\text{transform}~$A$~\text{with}~{\rm sign}_{L}(A) = -1~\text{into}~B \in \mathcal{B}~\text{with}~|B| = -1.  
       \end{cases}
\end{equation}
Then, $(\mathcal{B}, f \circ {P'})$ is denoted by $\mathcal{U}_L ((\mathcal{A}, P))$ or simply $\mathcal{U}_{L}(P)$.  
\end{definition}

\begin{theorem}\label{0_to_diagonal}
For any $L \subset {\rm crs}(\alpha/\tau)$ and for any nanophrases $(\mathcal{A}_{1}, P_{1})$ and $(\mathcal{A}_{2}, P_{2})$, 
if $(\mathcal{A}_{1}, P_{1}) \simeq_{\Delta_{\alpha}} (\mathcal{A}_{2}, P_{2})$, then $\mathcal{U}_{L}((\mathcal{A}_{1}, P_{1})) \simeq_{S_{0}} \mathcal{U}_{L}((\mathcal{A}_{2}, P_{2}))$.  
\end{theorem}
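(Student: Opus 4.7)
The plan is to verify that $\mathcal{U}_L$ sends each generating $\Delta_\alpha$-homotopy move in $\mathcal{P}(\alpha,\tau)$ to either an $S_0$-homotopy move or an identity in $\mathcal{P}(\alpha_0,\tau_0)$. Since $\Delta_\alpha$-homotopy is generated by $\alpha$-alphabet isomorphisms together with (H1), (H2), and (H3) (the last restricted to triples $(a,a,a)$), checking these four generators suffices. Isomorphisms are immediate because they preserve the projection to $\alpha$, hence preserve $\mathrm{sign}_L$, and thus descend to $\alpha_0$-alphabet isomorphisms after Steps 1 and 2. For (H1), the deleted letter $A$ either has $\mathrm{sign}_L(A)=0$, in which case Step 1 already erases it on both sides and the images coincide, or it survives Step 1 to give an (H1) move $x'AAy'\to x'y'$ on the $\alpha_0$ side.

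For (H2), since $|B|=\tau(|A|)$, the projections of $A$ and $B$ share a common $\tau$-orbit $\widetilde{|A|}$. If $\widetilde{|A|}$ is not free, then $|A|=|B|$ is $\tau$-fixed, so $\mathrm{sign}_L(A)=\mathrm{sign}_L(B)=0$ by definition; both letters are erased and the images agree. If $\widetilde{|A|}$ is free, then its unique representative in $\mathrm{crs}(\alpha/\tau)$ either lies in $L$ or not: in the first case $A$ and $B$ acquire opposite signs in $\{+1,-1\}$, producing an (H2) move in $\mathcal{P}(\alpha_0,\tau_0)$ because $\tau_0(1)=-1$; in the second case both have sign $0$ and are deleted.

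For (H3) with $(|A|,|B|,|C|)\in\Delta_\alpha$, all three letters share a single projection $a\in\alpha$, hence a single orbit $\widetilde{a}$. If $\widetilde{a}$ is non-free, all three signs are $0$ and all three letters are deleted, so both sides coincide. If $\widetilde{a}$ is free, all three letters receive the \emph{same} sign $+1$, $-1$, or $0$ depending on whether the CRS-representative of $\widetilde{a}$ lies in $L$, whether $a\in\tau(L)$, or otherwise; in the nonzero cases the target move is (H3) with triple $(1,1,1)$ or $(-1,-1,-1)$, both lying in $S_0$, while in the zero case all three letters are erased from both sides. The essential obstacle is bookkeeping of the interplay among the CRS, $L$, and the $\tau$-action, but the key observation that makes everything close is precisely the $\Delta_\alpha$-restriction on (H3): it forces all three letters to share a single orbit and hence a single sign, so no forbidden mixed triple such as $(1,1,-1)$ can appear on the $\alpha_0$ side.
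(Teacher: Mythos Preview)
Your proof is correct and follows essentially the same approach as the paper: both verify invariance of $\mathcal{U}_L$ under isomorphisms and the three homotopy moves by splitting into the cases $\mathrm{sign}_L=0$ versus $\mathrm{sign}_L\neq 0$, using that $|A|=\tau(|B|)$ in (H2) forces $A,B$ into a common orbit and that the $\Delta_\alpha$-restriction in (H3) forces $A,B,C$ to share a common projection and hence a common sign. Your write-up is somewhat more explicit than the paper's about \emph{why} the orbit structure makes the case split exhaustive, but the argument is the same; one small cosmetic point is that the paper declares the target as $\mathcal{P}(\alpha_0,\tau_0)$ while using labels $\pm 1$ (tacitly identifying $\alpha_0$ with $\alpha_1$ via Remark~\ref{s1tos0}), so your use of $\tau_0(1)=-1$ and $(1,1,1),(-1,-1,-1)\in S_0$ should be read under that identification.
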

\begin{proof}
By definition, isomorphisms does not change the $\mathcal{U}_{L}(P)$.\par
Consider the first homotopy move 
$$P_{1}=(\mathcal{A}, xAAy) \longrightarrow P_{2}=(\mathcal{A} \setminus
\{A\}, xy)$$
where $x$ and $y$ are words on $\mathcal{A}$, 
possibly including the character  $|$.
Suppose ${\rm sign}(A) \neq 0$. Then,
$$\mathcal{U}_{L}(P_{1})=x_{L}AAy_{L} \simeq x_{L}y_{L} = 
\mathcal{U}_{L}(P_{2})$$
where $x_{L}$ and $y_{L}$ are words that are obtained by deleting all letters
$X \in \mathcal{A}$, such that ${\rm sign}_{L}(X) = 0$, from $x$ and
$y$, respectively.\par
Suppose ${\rm sign}(A) = 0$. Then,
$$\mathcal{U}_{L}(P_{1})=x_{L}y_{L}=\mathcal{U}_{L}(P_{2}).$$
Thus the first homotopy move does not change the 
homotopy class of $\mathcal{U}_{L}(P)$.\par
Consider the second homotopy move
$$P_{1}=(\mathcal{A}, xAByBAz) \longrightarrow P_{2} =
(\mathcal{A} \setminus \{A,B\}, xyz)$$
where $|A|=\tau(|B|)$, and $x$, $y$, and $z$ are words on $\mathcal{A}$
possibly including the character $|$.
Suppose $|A| \in L \cup \tau(L)$ and $\widetilde{|A|}$ is free orbit. 
Then, $|B| \in L \cup \tau(L)$ and $\widetilde{|A|}$ is free orbit 
since $|A|= \tau(|B|)$. Thus
$$\mathcal{U}_{L}(P_{1})=x_{L}ABy_{L}BAz_{L} 
\simeq x_{L}y_{L}z_{L} = \mathcal{U}_{L}(P_{2}).$$
where $x_{L}$, $y_{L}$, and $z_{L}$ are words 
that are obtained by deleting all letters
$X \in \mathcal{A}$ such that ${\rm sign}_{L}(X) = 0$, from $x$, $y$, and 
$z$, respectively.
Suppose $|A| \not\in L \cup \tau(L)$ or $|A|$ is a fixed point of $\tau$. 
Then, $|B| \not\in L \cup \tau(L)$ or $|B|$ is a fixed point of $\tau$ 
since $|A|= \tau(|B|)$. Thus,
$$\mathcal{U}_{L}(P_{1})= x_{L}y_{L}z_{L} = \mathcal{U}_{L}(P_{2}).$$
The above equation shows that 
the second homotopy move does not change the homotopy 
class of $\mathcal{U}_{L}(P)$.\par
Consider the third homotopy move 
$$P_{1}= (\mathcal{A}, xAByACzBCt) \rightarrow 
P_{2}=(\mathcal{A}, xBAyCAzCBt)$$
where $|A|=|B|=|C|$, and $x$, $y$, $z$, and $t$ are words on 
$\mathcal{A}$ possibly including the character $|$.
Suppose ${\rm sign}_{L}(A) \neq 0$. Then, ${\rm sign}(B) \neq 0$ and ${\rm sign}_{L}(C) \neq 0$ 
since $|A|=|B|=|C|$. Thus we obtain
$$\mathcal{U}_{L}(P_{1}) = x_{L}ABy_{L}ACz_{L}ACt_{L} 
\simeq x_{L}BAy_{L}CAz_{L}CBt_{L}
=\mathcal{U}_{L}(P_{2})$$
where $x_{L}$, $y_{L}$, $z_{L}$ and $t_{L}$ 
are words that are obtained by deleting all letters
$X \in \mathcal{A}$ such that ${\rm sign}_{L}(X) = 0$ from $x$, $y$, $z$, 
and $t$ respectively.
Suppose ${\rm sign}_{L}(A) = 0$. Then, ${\rm sign}(B), {\rm sign}_{L}(C) = 0$ 
since $|A|=|B|=|C|$. Thus we obtain
$$\mathcal{U}_{L}(P_{1}) = x_{L}y_{L}z_{L}t_{L} = \mathcal{U}_{L}(P_{2}).$$
Thus the third homotopy move does not change 
the homotopy class of $\mathcal{U}_{L}(P)$.\par
By the above argument,  
$\mathcal{U}_{L}$ is a homotopy invariant of nanophrases.   
\end{proof}

\begin{corollary}\label{f-cor}
Let $I$ be an $S_{0}$-homotopy invariant of nanophrase over $\alpha_{0}$.  
For $P$ $\in \mathcal{P}(\alpha, \tau)$, we define $I'$ as \[I'(P) := \big\{I(\mathcal{U}_{L}(P))\big\}_{L \subset {\rm crs(\alpha/\tau)}}.  \]
$I'$ is a $\Delta_{\alpha}$-homotopy invariant of $P$ $\in \mathcal{P}(\alpha, \tau)$.  
In particular, for $(\mathcal{A}, P)$ $\in \mathcal{P}(\alpha_{0}, \tau_{0})$, $I'(P)$ $=$ $\{I(P)\}$ if ${\rm crs}(\alpha_{0}/\tau_{0})$ $=$ $\{1\}$.  
\end{corollary}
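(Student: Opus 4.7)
The plan is to derive Corollary~\ref{f-cor} as a direct packaging of Theorem~\ref{0_to_diagonal}, combined with the hypothesis that $I$ is constant on $S_0$-homotopy classes. The essential content is already established; what remains is only to check (a) that the indexed family $\{I(\mathcal{U}_L(P))\}_L$ is well defined on $\Delta_\alpha$-equivalence classes, and (b) that in the degenerate case $\alpha=\alpha_0$ the map $\mathcal{U}_L$ reduces to the identity (up to the labeling in Step~2 of $\mathcal{U}_L$).

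For (a), I would fix $P_1, P_2 \in \mathcal{P}(\alpha, \tau)$ with $P_1 \simeq_{\Delta_\alpha} P_2$ and fix any nonempty $L \subset {\rm crs}(\alpha/\tau)$. Theorem~\ref{0_to_diagonal}, applied to this $L$, yields $\mathcal{U}_L(P_1) \simeq_{S_0} \mathcal{U}_L(P_2)$. Since $I$ is an $S_0$-homotopy invariant, $I(\mathcal{U}_L(P_1)) = I(\mathcal{U}_L(P_2))$. As $L$ ranges over all nonempty subsets of ${\rm crs}(\alpha/\tau)$, the resulting indexed families therefore agree componentwise, so $I'(P_1) = I'(P_2)$. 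This shows $I'$ is a well-defined $\Delta_\alpha$-homotopy invariant of $P \in \mathcal{P}(\alpha, \tau)$.

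For (b), suppose $(\alpha,\tau) = (\alpha_0,\tau_0)$ with ${\rm crs}(\alpha_0/\tau_0)$ a singleton, say $\{a\}$. The only nonempty subset is $L=\{a\}$, and $L \cup \tau(L) = \{a,b\} = \alpha_0$. Consequently, for every letter $A$ of $P$ the orbit $\widetilde{|A|}=\{a,b\}$ is free and $|A| \in L \cup \tau(L)$, so ${\rm sign}_L(A) \in \{\pm 1\}$ and Step~1 of $\mathcal{U}_L$ deletes nothing. The relabeling $f$ in Step~2 sends $|A|=a \mapsto 1$ and $|A|=b \mapsto -1$, which, under the canonical identification of $\alpha_0$ with $\{\pm 1\}$, reproduces $P$. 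Hence $\mathcal{U}_L(P) = P$ and $I'(P) = \{I(P)\}$.

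I do not expect any real obstacle: once Theorem~\ref{0_to_diagonal} is in hand and $I$ is assumed $S_0$-invariant, the first claim is immediate, and the degenerate case is just a bookkeeping check that nothing is erased in Step~1 of $\mathcal{U}_L$ when $\alpha=\alpha_0$.
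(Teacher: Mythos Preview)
Your proposal is correct and matches the paper's approach: the corollary is stated without proof, as it follows immediately from Theorem~\ref{0_to_diagonal} together with the $S_0$-invariance of $I$, exactly as you argue. Your bookkeeping check for the degenerate case $(\alpha,\tau)=(\alpha_0,\tau_0)$ is also appropriate, since the paper leaves this verification implicit.
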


Theorem \ref{0_to_diagonal} implies the following corollaries.  

\begin{corollary}\label{jones-apply}
Let $\alpha$ be an arbitrary alphabet.  $J(\mathcal{U}_{L}(P))$ are $\Delta_{\alpha}$-homotopy invariants for nanophrases $P$ over $\alpha$.  
\end{corollary}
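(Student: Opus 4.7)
The plan is to exhibit $J \circ \mathcal{U}_{L}$ as a composition of two already-established invariances, so that the conclusion becomes a formal consequence of Theorem~\ref{0_to_diagonal} together with Corollary~\ref{jones-s0} (equivalently, a direct instantiation of Corollary~\ref{f-cor}).

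Concretely, I would proceed as follows. First, suppose $P_1$ and $P_2$ are $\Delta_\alpha$-homotopic nanophrases in $\mathcal{P}(\alpha,\tau)$. By Theorem~\ref{0_to_diagonal}, for every nonempty subset $L \subset \mathrm{crs}(\alpha/\tau)$ the images satisfy
\[
\mathcal{U}_{L}(P_{1}) \simeq_{S_{0}} \mathcal{U}_{L}(P_{2})
\]
as nanophrases over $\alpha_{0}$. Next, recall that by Corollary~\ref{jones-s0} the Jones polynomial $J$ is an $S_{0}$-homotopy invariant of nanophrases over $\alpha_{0}$. Applying $J$ to both sides of the above $S_{0}$-equivalence therefore yields
\[
J(\mathcal{U}_{L}(P_{1})) = J(\mathcal{U}_{L}(P_{2})).
\]
Since this holds for every $L$, the collection $\{J(\mathcal{U}_{L}(P))\}_{L}$ depends only on the $\Delta_\alpha$-homotopy class of $P$, which is exactly the claim.

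There is essentially no obstacle to overcome here: the real work has already been absorbed into Theorem~\ref{0_to_diagonal} (which verifies that $\mathcal{U}_L$ converts each of the homotopy moves (H1)--(H3) for $\Delta_\alpha$ into legitimate $S_0$-moves) and into the earlier construction of $J$ as an invariant on $\mathcal{P}_u(\alpha_1,\tau_1,S_1,\nu_1)$ together with the inclusion $S_0 \subset S_1$ exploited in Remark~\ref{s1tos0}. The only point worth noting is that Corollary~\ref{jones-apply} is literally the special case of Corollary~\ref{f-cor} with $I = J$; one could equivalently phrase the proof in a single line as ``apply Corollary~\ref{f-cor} to $I=J$.'' I would therefore present the argument in the compact form above, making the functorial picture explicit: the functor $\mathcal{U}_L$ carries the $\Delta_\alpha$-homotopy category into the $S_0$-homotopy category, and any invariant of the latter pulls back to an invariant of the former.
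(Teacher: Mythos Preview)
Your proposal is correct and matches the paper's approach exactly: the paper presents Corollary~\ref{jones-apply} as an immediate consequence of Theorem~\ref{0_to_diagonal} (combined with Corollary~\ref{jones-s0}, i.e., the instance $I=J$ of Corollary~\ref{f-cor}), with no separate argument given. Your write-up simply unpacks this one-line implication.
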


\begin{remark}\label{fvl-to-delta}
Let $\alpha$ be an arbitrary alphabet.  For every invariant $i_{0}$ of pointed ordered flat virtual links, $i_{0}(\mathcal{U}_{L}(P))$ is a $\Delta_{\alpha}$-homotopy invariant for nanophrases $P$ over $\alpha$.  
\end{remark}

\section*{Acknowledgments}   
The first author was a Research Fellow of the Japan Society for the Promotion of Science,   and 
this work was partly supported by KAKENHI Grant Number 09J01599. 
The second author was a Research Fellow of the Japan Society for the Promotion of Science (Number:20$\cdot$935).  The work of the second author was partially supported by IRTG 1529, a fund of Waseda University (2010A-863), JSPS KAKENHI Grant Numbers JP20K03604, JPK22K03603, and Toyohashi Tech Project of Collaboration with KOSEN Grant Number 2309.  
The authors also thank Andrew Gibson who pointed out the possibility of a variation of $\mathcal{U}_{L}$ for a talk about an early version.
Finally, the authors would like to thank Professor Goo Ishikawa for his strong encouragement of the authors' research even before the evaluation of the development of the nanophrase theory was established.  
\bibliographystyle{plain}
\bibliography{Ref}
\end{document}